\documentclass[12pt,reqno,a4paper]{amsart}
\usepackage{blindtext}
\usepackage{fullpage}
\usepackage{mathtools}
\usepackage{longtable}
\usepackage{amsmath,amssymb,amsthm}
\usepackage{amscd}
\usepackage{bm}
\usepackage{hyperref}   
\usepackage{dsfont}
\usepackage{enumerate}
\usepackage{epsfig}
\usepackage{float, graphicx}
\usepackage{latexsym, amsxtra}
\usepackage{mathrsfs}
\usepackage{multicol}
\usepackage[normalem]{ulem}
\usepackage{psfrag}
\usepackage[parfill]{parskip}
\usepackage{stmaryrd}
\usepackage{tikz}
\usepackage[T1]{fontenc}
\usepackage{url}
\usepackage{verbatim}
\usepackage{indentfirst}
\usepackage{tikz-cd}
\usepackage{booktabs}
\usepackage{svg}

\usepackage{caption} 

\makeatletter
\def\thm@space@setup{%
	\thm@preskip=2ex \thm@postskip=2ex
}
\makeatother

\hypersetup{hidelinks}

\newtheorem{thm}{Theorem~}[section]
\newtheorem{lem}[thm]{Lemma~}

\newtheorem{prop}[thm]{Proposition~}
\newtheorem{ques}[thm]{Question~}
\newtheorem{cor}[thm]{Corollary~}

\newtheorem{conj}[thm]{Conjecture~}

\theoremstyle{remark}
\newtheorem{rmk}[thm]{Remark~}

\theoremstyle{definition}  

\newcommand{\CC}{\mathbb{C}}
\newcommand{\ZZ}{\mathbb{Z}}

\newcommand{\PP}{\mathbb{P}}

\newcommand{\calA}{\mathcal{A}}
\newcommand{\calB}{\mathcal{B}}

\newcommand{\calO}{\mathcal{O}}

\newcommand{\calS}{\mathcal{S}}
\newcommand{\calI}{\mathcal{I}}

\newcommand{\calN}{\mathcal{N}}

\newcommand{\GL}{\mathrm{GL}}

\newcommand\Sym{\mathrm{Sym}}

\newcommand\Spec{\mathrm{Spec}}

\newcommand{\rank}{\mathrm{rank}}
\newcommand{\Pic}{\mathrm{Pic}}

\newcommand{\Gr}{\mathrm{Gr}}

\newcommand{\pr}{\mathrm{pr}}

\newcommand{\calQ}{\mathcal{Q}}
\newcommand{\calW}{\mathcal{W}}
\newcommand{\calHom}{\mathcal{H}om}

\newcommand{\sm}{\mathrm{sm}}
\newcommand{\red}{\mathrm{red}}
\newcommand{\codim}{\mathrm{codim}}
\newcommand{\NF}{\mathrm{NF}}
\newcommand{\Frac}{\mathrm{Frac}}
\newcommand{\PGL}{\mathrm{PGL}}
\newcommand{\Sing}{\mathrm{Sing}}

\hypersetup{
 pdftitle={Dimensions of Fano schemes of lines on hypersurfaces},
 pdfauthor={Chenglong Yu}
}
\title{Dimensions of Fano schemes of lines on hypersurfaces}
\vspace{1.2cm}
\author{Chenglong Yu}
\date{}

\newcommand{\Addresses}{{%
 \bigskip
 \footnotesize
 C.~Yu, \textsc{Center for Mathematics and Interdisciplinary Sciences, Fudan University and Shanghai Institute for Mathematics and Interdisciplinary Sciences (SIMIS), Shanghai, China}\par\nopagebreak
 \textit{Email address}: \texttt{yuchenglong@simis.cn}
}}

\begin{document}
\bibliographystyle{amsalpha-shorthand}
\begin{abstract}
Let $X\subset\PP^n_\CC$ be a smooth hypersurface of degree $d\ge2$,
with $n\ge3$. We prove that an $m$-dimensional integral projective family
of lines sweeping a variety of dimension $y$ satisfies
$m\le\max\{2y-d-1,n-3\}$. This proves the de Jong--Debarre
conjecture for $d\le n$ and the Beheshti--Starr bound for lines on
smooth hypersurfaces for $d\ge n+1$.
\end{abstract}
\maketitle

\section{Introduction}\label{sec:intro}
In this paper, we assume all schemes are of finite type over $\CC$ and a variety means an integral separated scheme. Let $X\subset\PP^n_\CC$ be a smooth hypersurface of degree $d\ge2$,
with $n\ge3$. Its Fano scheme of lines $F_1(X)\subset\Gr(2,n+1)$
parametrizes lines contained in $X$. It is locally cut out by $d+1$
equations, so every nonempty component has dimension at least
$2n-d-3$. In the Fano range, the expected upper bound is the following.

\begin{conj}[de Jong--Debarre {\cite[Conjecture~1]{Debarre03}}]
\label{conj:DJD}
If $d\le n$, then $\dim F_1(X)=2n-d-3$.
\end{conj}

For higher degrees, the Beheshti--Starr question
\cite[Question~1.3]{B05}, restricted to lines on smooth
hypersurfaces, takes the following form \cite[p.~574]{LT}.
\begin{ques}[Beheshti--Starr]\label{ques:BS}
If $d\ge n$, does every irreducible component of $F_1(X)$ have
dimension at most $n-3$?
\end{ques}
Here $F_1(X)$ may be empty. For every $n\ge3$ and $d\ge n+1$,
the classical Fermat example attains the bound $n-3$
\cite[p.~1]{Debarre03}. See \S\ref{subsec:fermat}. A stronger statement allowing
singular hypersurfaces is formulated in \cite[Conjecture~1.1]{LT}.

For $d\le n$, Beheshti proves Conjecture~\ref{conj:DJD} in degree
at most eight \cite{B06,B14}.
Beheshti--Riedl prove it for $n\ge2d-4$ using equations for lines
through a fixed point and a bound on the dimension of a tangency
locus \cite[Corollary~2.5]{BR}. They also prove
irreducibility for $n\ge2d-1$ and $n\ge4$
\cite[Corollary~2.8]{BR}. Landsberg--Tommasi relate both problems to Chern-class nonvanishing
on families of lines through a point \cite{LT}.
We prove the following result.

\begin{thm}\label{thm:main}
Let $X\subset\PP^n_\CC$ be a smooth hypersurface of degree $d\ge2$, with $n\ge3$.
\begin{enumerate}[(i)]
\item If $d\le n$, then $F_1(X)$ is nonempty and every irreducible
component has dimension $2n-d-3$. 
\item If $d\ge n+1$, every irreducible component of $F_1(X)$ has
dimension at most $n-3$.
\end{enumerate}
\end{thm}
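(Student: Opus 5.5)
The plan is to deduce Theorem~\ref{thm:main} from the bound announced in the abstract. Let $M\subset F_1(X)$ be an integral projective family of lines of dimension $m$, and let $Y\subset X$ be the variety it sweeps, of dimension $y$. Then
\[
m\le\max\{2y-d-1,\,n-3\}.
\]
Apply this to an irreducible component $M$ of $F_1(X)$, taken with its reduced structure, and use only the trivial inequality $y\le\dim X=n-1$. This gives
\[
m\le\max\{2n-d-3,\,n-3\}.
\]
If $d\ge n$, the maximum is $n-3$; this gives (ii) when $d\ge n+1$. If $d\le n$, the maximum is $2n-d-3$. Combined with the lower bound $\dim\ge 2n-d-3$ coming from the $d+1$ local equations, every component has exactly this dimension.

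Nonemptiness for $d\le n$ is classical. On $\Gr(2,n+1)$ the locus $F_1(X)$ is the zero scheme of a section of $\Sym^d\calS^\vee$, which has rank $d+1\le 2(n-1)$. Its top Chern class is nonzero, since it is a positive combination of Schubert classes. So $F_1(X)\neq\emptyset$ whenever the expected dimension is nonnegative. Alternatively, one can count lines through a general point: they are cut out by $d$ equations in $\PP^{n-2}$, and $d\le n-1$ gives a nonempty solution set. The case $d=n$ needs the Grassmannian Chern-class argument instead.

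The real content is therefore the sweeping bound itself, which the paper presumably proves later. My approach to it would be as follows. Take a general point $p\in Y$ and let $M_p$ be the family of lines of $M$ through $p$, so that $\dim M_p=m+1-y$. Write $X$ in affine coordinates at $p$ as $f=f_1+\dots+f_d$. The lines through $p$ in $X$ are the points of $\PP(T_pX)$ cut out by $f_2=\dots=f_d=0$, and $M_p$ lies in the cone of lines through $p$ contained in $Y$, of dimension $\le y-1$.

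The key step, which I expect to be the main obstacle, is to show that $M_p$ has codimension at least about $d-1-(\text{defect})$ inside the $(y-1)$-dimensional space of directions tangent to $Y$. In other words, the $d-1$ equations $f_2,\dots,f_d$ should impose nearly independent conditions on these directions, unless $Y$ is so degenerate that the second alternative $m\le n-3$ holds. I would first try to control the failure of independence via the second fundamental form and Gauss map of $X$ along $Y$: smoothness of $X$ makes its Gauss map finite. If the equations fail to cut properly, the tangent hyperplanes along lines of $M$ must be constant in too many directions, and this would force $m\le n-3$ through a bound of Beheshti--Starr type on families of lines with constant tangent spaces.

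Modulo this, we get $m+1-y\le y-1-(d-1)$, that is, $m\le 2y-d-1$.
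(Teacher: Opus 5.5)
Your deduction matches the paper's own proof: the swept-variety bound with the trivial estimate $y\le n-1$ gives $m\le\max\{2n-d-3,\,n-3\}$ for every reduced component, which yields (ii) directly and, together with the lower bound from the $d+1$ local equations, the equality in (i); nonemptiness for $d\le n$ is classical (the paper cites Barth--Van de Ven, your nonvanishing of $c_{d+1}(\Sym^d\calS^\vee)$ for $d\le 2n-3$ is the standard proof of that fact, and in your alternative count the lines through a point are cut out by $d-1$ equations in $\PP^{n-2}$, not $d$). Your sketch of the sweeping bound itself is not needed for this statement, and as written it is not a proof because the independence step you flag is the whole difficulty (tangency-locus arguments of this kind only close for $n\ge 2d-4$), whereas the paper obtains your target $t=m+1-y\le y-d$, once $m\ge n-2$, from the sandwich $d-1\le b\le 2y-m-2$ for the degree of the Nash-modification quotient bundle along a general line.
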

This proves Conjecture~\ref{conj:DJD} and answers
Question~\ref{ques:BS}. The main estimate comes from the dimension of the swept variety.

\begin{thm}\label{thm:swept}
Let $S\subset F_1(X)$ be a closed subvariety of dimension
$m$, and let $y$ be the dimension of the union of its lines. Then
\begin{equation}\label{eq:swept}
 m\le\max\{2y-d-1,\,n-3\}.
\end{equation}
\end{thm}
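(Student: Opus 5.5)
Let $Y\subset X$ be the closure of the union of the lines of $S$, so $Y$ is irreducible of dimension $y$. Let $P=\{(\ell,p):\ell\in S,\,p\in\ell\}$ be the universal line, with projections $q:P\to S$ and $\pi:P\to Y$. Then $\dim P=m+1$, and the general fibre $S_p=\pi^{-1}(p)$ has dimension $m+1-y$. So I want to bound, for a general point $p\in Y$, the dimension of the family $S_p$ of lines of $S$ through $p$. Lines through $p$ form a subvariety of $\PP(T_pX)\cong\PP^{n-2}$, and they are cut out by the equations $f_2=\dots=f_d=0$, where $f_k$ is the degree-$k$ part of the Taylor expansion of $F$ at $p$. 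The cone over $S_p$ also lies in $\PP(T_pY)\cong\PP^{y-1}$. I will argue in two regimes. When the lines of $S$ through a general point of $Y$ impose many conditions, I expect to get $\dim S_p\le y-1-(d-?)$-type bounds, giving $m\le 2y-d-1$. Otherwise I expect a degenerate situation giving $m\le n-3$.

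\textbf{Step 1 (the main inequality).} Consider a general line $\ell\in S$ and the normal bundle data. Along $\ell$ the second fundamental form of $X$ is a quadric. More usefully, differentiating $F$ along the family gives a map $N_{\ell/Y}\to N_{X/\PP^n}|_\ell=\calO(d)$ coming from the second-order part. Smoothness of $X$ means the partials $\partial_iF$ have no common zero. The tangent space to $S$ at $\ell$ injects into $H^0(N_{\ell/Y'})$ for the subsheaf generated by the deformations. I will prove the key lemma: for general $\ell\in S$ and general $p\in\ell$, the lines through $p$ in $S$ span a cone $C_p$, and each line of $C_p$ satisfies $f_2,\dots,f_d$. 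Restricting $F$ to the linear span $\Lambda_p=T_pY$ and using that the $d-1$ forms $f_2|_{\Lambda},\dots,f_d|_\Lambda$ vanish on $C_p$, I aim for $\dim S_p\le (y-1)-(d-1)+\epsilon$. Here $\epsilon$ measures the failure of these forms to be independent on $C_p$. A clean independence gives $m+1-y\le y-d$, i.e. $m\le 2y-d-1$.

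\textbf{Step 2 (the degenerate case).} If independence fails, some $f_k$ vanishes identically on a component of $C_p$ modulo the others, for general $p$. Then I will use a Terracini/Gauss-map style argument. The tangent hyperplane $T_pX$ varies with $p$, and the Gauss map of a smooth hypersurface is finite, by Zak. Together these should force the lines in the degenerate part to lie in subvarieties where the polar map $p\mapsto[\partial F(p)]$ constrains them. Concretely, I will consider the incidence $\{(\ell,H): \ell\subset H\cap X,\ H \text{ tangent along }\ell\}$. A line on which some $f_k$ degenerates for all $p$ forces $T_pX$ to be constant along $\ell$ to first order. Since the Gauss map is finite, the image of $\ell$ in $(\PP^n)^\vee$ is a curve, and the tangent hyperplanes along $\ell$ form a family of dimension at most... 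This should yield $m\le n-3$ by counting pairs (point of $X$, line through it in a fixed linear condition). The standard bound is that lines through a general point of $X$ with an $(n-2)$-dimensional family of contact yield $\dim\le n-3$.

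\textbf{Main obstacle.} The hard part is making the dichotomy of Step 1 precise. I need a statement showing that either the restricted Taylor forms cut $C_p$ with expected codimension, or a rigidity phenomenon occurs that is controlled by the finiteness of the Gauss map. I would try an induction on $d$, replacing $F$ by a polar $\partial_vF$ for $v\in\Lambda_p$. Lines in $X$ along which $\partial_vF$ also vanishes lie on a degree-$(d-1)$ hypersurface section. Applying induction to this section, with care about its singularities, would give the $-1$ per degree needed for $2y-d-1$. The $n-3$ branch arises when this polar hypersurface contains all of $Y$.
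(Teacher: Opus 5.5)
\textbf{There is a genuine gap.} Your numerology is right: $\dim S_p=m+1-y$, so $m\le 2y-d-1$ is equivalent to $\dim S_p\le y-d$ for general $p\in Y_{\sm}$. But the dichotomy that carries the whole proof is never established, and as formulated it is not a workable mechanism. ``Clean independence'' of $f_2|_\Lambda,\dots,f_d|_\Lambda$ along $C_p$ does not follow from smoothness of $X$. If $Y=\PP^y\subset X$ and $S$ is the Grassmannian of its lines, every $f_k|_\Lambda$ vanishes identically. In the Fermat family of \S\ref{subsec:fermat} one has $y-d<0$ while $S_p\ne\emptyset$. In these cases only the $n-3$ term saves the inequality (for the linear space it is the classical bound $y\le(n-1)/2$), and Step~2 gives no argument producing it. The assertions that degeneracy of some $f_k$ forces $T_pX$ to be constant to first order along $\ell$, and that finiteness of the Gauss map then yields $m\le n-3$, are unsupported, and no actual count is set up. Expected codimension for the Taylor equations of lines through a point is exactly what Beheshti--Riedl control with a tangency-locus bound. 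That bound is sharp (see the example in \S\ref{sec:comparison}) and only reaches $n\ge 2d-4$, so a genuinely new input is needed. The polar induction also breaks the hypotheses. $V(\partial_vF)$ and its intersection with $X$ are singular in general, so you would need a version of the theorem for singular hypersurfaces. The actual argument uses smoothness essentially, through surjectivity of the gradient map.

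In the paper, the alternative $n-3$ is not a geometric degeneracy but a numerical threshold. Let $k=m+2-y$ be the dimension of the cone over the lines of $S$ through a general $x\in Y_{\sm}$, and let $r=n-y$ be the rank of the tautological quotient $\calB_Y$ on the Nash modification of $Y$. Then $m\ge n-2$ is exactly $k\ge r$. Smoothness of $X$ gives a surjection $\calB_Y\twoheadrightarrow\nu^*\calO_Y(d-1)$. On the normalized graph closure $W$ of the cone, $B=g^*\calB_Y$ is trivial on the vertex section, so $\det B=L^b(-D)$ with $D$ effective and $L$ the pullback of $\calO_Y(1)$. Schubert positivity (Lemma~\ref{lem:schur}) gives $I_{i+1}\le bI_i$. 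Hence $\int_Wc_1(L)^{k-r}c_r(B^\vee\otimes L^{d-1})>0$ if $b<d-1$, whereas the surjection forces $c_r(B^\vee\otimes L^{d-1})=0$; so $b\ge d-1$. Deformation theory gives the opposite bound: $T_sS\hookrightarrow H^0(M_\ell)$ with $M_\ell\simeq\calO_\ell(1)^{y-1-b}\oplus\calO_\ell^{b}$, so $m\le 2y-2-b$. You mention the deformation injection in Step~1 but never pair it with a lower bound on a common invariant. Your Gauss-map intuition points at the right input, but it has to enter as a line-bundle quotient of a globally generated bundle of rank at most the dimension, combined with a positivity theorem, rather than through finiteness of the Gauss map. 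The Fermat family, where $k=1<r=2$ and $b=0$, shows that this lower bound really fails below the threshold.
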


The swept-variety estimate also gives the following consequences.
A line $\ell\subset X$ is \emph{free} if its normal bundle
$N_{\ell/X}$ is globally generated.
\begin{cor}\label{cor:free-irreducible}
Let $X\subset\PP^n_\CC$ be a smooth hypersurface of degree $d\ge2$, with $n\ge3$.
\begin{enumerate}[(i)]
\item If $d\le n-1$, a general line in every irreducible component
of $F_1(X)$ is free.
\item If $d\le n-2$, then $F_1(X)$ is irreducible.
\end{enumerate}
\end{cor}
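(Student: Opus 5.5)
We derive both parts of Corollary~\ref{cor:free-irreducible} from Theorem~\ref{thm:main}(i) and the swept-variety estimate of Theorem~\ref{thm:swept}, together with the standard deformation theory of lines on $X$. For a line $\ell\subset X$ we have
\[
N_{\ell/X}\cong\calO_\ell(1)^{a}\oplus\calO_\ell^{b}\oplus\bigoplus_{i}\calO_\ell(-c_i),\qquad c_i\ge1,
\]
with $\deg N_{\ell/X}=n-d-1$ and $\rank N_{\ell/X}=n-2$. Also $T_{[\ell]}F_1(X)=H^0(N_{\ell/X})$. The line $\ell$ is free if and only if no $c_i$ occurs.

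\textbf{Part (i).} Let $S$ be an irreducible component of $F_1(X)$, with its reduced structure. By Theorem~\ref{thm:main}(i), $\dim S=2n-d-3$. Let $y$ be the dimension of the variety swept out by the lines of $S$.
\begin{enumerate}[(1)]
\item First we show that $S$ sweeps out all of $X$, i.e. $y=n-1$. Apply \eqref{eq:swept} with $m=2n-d-3$. Since $d\le n-1$, we have $2n-d-3\ge n-2>n-3$, so the maximum in \eqref{eq:swept} must be $2y-d-1$. This gives $2n-d-3\le 2y-d-1$, hence $y\ge n-1$, so the lines of $S$ cover $X$.
\item Next we apply generic smoothness to the incidence correspondence $\{(p,\ell):p\in\ell,\ \ell\in S\}\to X$, which is dominant. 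It shows that for a general $\ell\in S$ the differential at a general point $p\in\ell$ is surjective.
\item The image of this differential lands in the image of $H^0(N_{\ell/X})\to N_{\ell/X}|_p$ (plus the tangent direction of $\ell$). Hence the sections of $N_{\ell/X}$ generate it at a general point.
\item A splitting summand $\calO(-c)$ with $c\ge1$ admits no sections, so generation at a general point forces all summands to have degree $\ge0$. Therefore $N_{\ell/X}$ is globally generated and $\ell$ is free.
\end{enumerate}
The only subtlety is that $S$ might be nonreduced as a component of $F_1(X)$. Generic smoothness is applied to the reduced $S$, and its tangent space injects into $H^0(N_{\ell/X})$, so the argument is unaffected.

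\textbf{Part (ii).} Assume $d\le n-2$. By part (i), a general point $[\ell]$ of each component has $N_{\ell/X}$ globally generated. Then $h^1(N_{\ell/X})=0$ and
\[
h^0(N_{\ell/X})=\deg N_{\ell/X}+\rank N_{\ell/X}=2n-d-3.
\]
So $F_1(X)$ is smooth of the expected dimension at general points of every component. Combined with Theorem~\ref{thm:main}(i), $F_1(X)$ is a local complete intersection in $\Gr(2,n+1)$ cut out by a section of $\Sym^d$ of the dual tautological bundle, of the expected dimension $2n-d-3$. Its class is ample-type and its dimension is $2n-d-3\ge n-1\ge 1$.

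We then aim to show that $F_1(X)$ is connected, via a Fulton--Lazarsfeld or Koszul-type argument: $H^0(\calO_{F_1(X)})=\CC$, obtained from the Koszul resolution together with Bott vanishing on the Grassmannian. The hypothesis $d\le n-2$ gives $\dim\ge n-1$, which is the needed range. Connectedness together with the following step then gives irreducibility.

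\textbf{Main obstacle.} The remaining step, and the one we expect to be hardest, is to rule out two components meeting in a singular locus. We plan to bound the singular locus using Theorem~\ref{thm:swept}. Singular points $[\ell]$ have $h^0(N_{\ell/X})>2n-d-3$, which forces $N_{\ell/X}$ to contain an $\calO(-c)$ summand. Such lines are not free, so a family $T$ of them cannot sweep out $X$. Hence $y\le n-2$, and \eqref{eq:swept} gives
\[
\dim T\le\max\{2n-d-5,\ n-3\}.
\]
This is at most $\dim F_1(X)-2$ when $d\le n-2$. So the singular locus has codimension $\ge2$, and by Hartshorne's connectedness theorem for the lci scheme $F_1(X)$, a connected lci scheme whose singular locus has codimension $\ge2$ is irreducible.
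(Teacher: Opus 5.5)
Part (i) is the paper's argument. For (ii) you take a genuinely different, global route, and modulo one imported step it is sound. You apply Theorem~\ref{thm:swept} to each component of the singular (indeed nonfree) locus. Such a component cannot sweep $X$, so $y\le n-2$ and its dimension is at most $\max\{2n-d-5,n-3\}=\dim F_1(X)-2$, exactly when $d\le n-2$. You then combine connectedness of $F_1(X)$ with Hartshorne's connectedness theorem for the Cohen--Macaulay scheme $F_1(X)$. This makes the smooth locus connected, hence irreducible and dense.

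Your route also gives normality at no extra cost. It is the mechanism of \eqref{eq:NF-codimension} and Corollary~\ref{cor:scheme-structure}(iii), and your cruder bound $y\le n-2$ already suffices for it.

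The paper instead argues locally at a general point $x$. Since no nonfree family covers $X$, every line through $x$ is free, so $F_1(X)$ is smooth along $F_1(X,x)=V(f_1,\dots,f_d)\subset\PP^{n-1}$. This fiber is a smooth complete intersection of dimension $n-d-1\ge1$, which is where $d\le n-2$ enters. It is connected by a Koszul computation involving only line bundles on $\PP^{n-1}$. Every component of $F_1(X)$ meets it because every component dominates $X$, and distinct components cannot meet along it, so there is only one component.

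The cost of your route is the global connectedness of $F_1(X)$. You leave it as an aim, and the justification you sketch does not work as stated. $\Sym^d\calS^\vee$ is globally generated but not ample on $\Gr(2,n+1)$, so no Fulton--Lazarsfeld-type ampleness argument applies, and $\dim\ge n-1$ is not the relevant condition. What the Koszul resolution actually requires is
$H^j(\Gr(2,n+1),\wedge^j\Sym^d\calS)=0$ for $1\le j\le d+1$. This is a genuine Bott-vanishing computation; it is how Debarre--Manivel prove connectedness of Fano schemes of expected dimension at least one.

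The vanishing is not automatic. For the smooth quadric surface the section is regular and the expected dimension is one, yet $F_1$ is two disjoint conics, so some of these groups are nonzero there. You should either cite that theorem, noting that $d\le n-2$ excludes the quadric surface, or carry out the computation. The paper's fiberwise argument avoids this entirely.
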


For $d\ge n-2$, every closed subvariety of $F_1(X)$ of dimension
at least $n-2$ parametrizes lines covering $X$
(Corollary~\ref{cor:debarre-covering}). This proves Debarre's
covering conjecture \cite[Conjecture~2]{Debarre03}.
The Fano scheme is also a local complete intersection for $d\le n$,
reduced for $d\le n-1$, and normal and integral for $d\le n-2$
(Corollary~\ref{cor:scheme-structure}).

We combine the deformation theory of lines through a fixed point,
as in Beheshti--Riedl, with the Chern-class approach of
Landsberg--Tommasi. For a family of dimension at least $n-2$, we
compare two bounds for the degree of the tautological quotient
on the Nash modification along a lifted general line.
Deformation theory gives the upper bound. The new step compares
determinants up to an effective divisor on a normalized graph
closure. The gradient map and Schubert calculus then give the
lower bound $d-1$ for the same degree. This allows singular swept
varieties and avoids the tangency-locus dimension bound of
\cite[Lemma~2.1]{BR}.

The paper is organized as follows.
In \S\ref{sec:nash}--\S\ref{sec:pointed}, we establish the line
estimate, the numerical quotient inequality, and the determinant
comparison. We prove the dimension theorems and
Corollaries~\ref{cor:debarre-covering} and~\ref{cor:free-irreducible}
in \S\ref{sec:proof}. In \S\ref{sec:comparison}, we compare our
argument with the earlier methods, derive bounds for nonfree lines,
and prove Corollary~\ref{cor:scheme-structure}.

\section{The Quotient Bundle on the Nash Modification}\label{sec:nash}
We construct the tautological quotient bundle on the Nash
modification of the swept variety and bound its degree along
the lift of a line by the dimension of the family.

We use the convention that $\PP(E)$ parametrizes one-dimensional
subspaces, with $\calO_{\PP(E)}(-1)\subset p^*E$.
A subbundle has locally free quotient. Degrees of vector bundles
on curves mean degrees of determinant bundles or equivalently the degrees of first Chern classes. We use Chern classes as
operations on Chow groups and integrate against fundamental cycles
\cite{Fulton}.

Let $\calS$ be the tautological bundle on $\Gr(2,n+1)$.
Restriction of an equation of $X$ realizes $F_1(X)$ as the zero
scheme of a section of $\Sym^d\calS^\vee$.
For a closed subvariety $S\subset F_1(X)$, let
$p_S\colon \calW=\PP_S(\calS|_S)\to S$ be its universal family,
with evaluation $e\colon \calW\to X$. This is the incidence variety
in $S\times X$ defined by $x\in\ell$, with its two projections. Put
\[
 m=\dim S,\qquad Y=e(\calW)_{\red},\qquad
 y=\dim Y,\qquad r=n-y,\qquad t=m+1-y.
\]
The projective bundle $\calW$ is integral, so its reduced
image $Y$ is integral. 

Write $V=\CC^{n+1}$. For $x\in Y_{\sm}$, let
$\widehat T_{Y,x}\subset V$ be the $(y+1)$-dimensional vector
space whose projectivization is the embedded tangent space.
Let $\widehat Y$ be the reduced graph closure of
$x\mapsto\widehat T_{Y,x}$ in $Y\times\Gr(y+1,V)$, with
projection $\nu\colon \widehat Y\to Y$. This is the projective Gauss-map
realization of the classical Nash modification
\cite[pp.~297--298]{Nobile}. The tautological sequence is
\begin{equation}\label{eq:nash}
 0\longrightarrow\calA_Y\longrightarrow V\otimes\calO_{\widehat Y}
 \longrightarrow\calB_Y\longrightarrow0,
 \qquad \rank\calA_Y=y+1,\quad\rank\calB_Y=r.
\end{equation}
Both terms are vector bundles, and $\calB_Y$ is globally generated.

The partial derivatives of a defining form $F$ of $X$ give the
gradient map $V\otimes\calO_Y\to\calO_Y(d-1)$, surjective since $X$ is smooth.
It annihilates $\widehat T_{Y,x}$ because differentiation of $F|_Y=0$
gives vanishing on the tangent directions, while Euler's identity
gives vanishing on the radial direction. After pullback, its restriction to $\calA_Y$
is zero on a dense open set, hence zero on the integral variety
$\widehat Y$. Thus it induces an everywhere-surjective map
\begin{equation}\label{eq:gradient}
 \calB_Y\twoheadrightarrow\nu^*\calO_Y(d-1).
\end{equation}

\begin{prop}\label{prop:line}
There is a nonempty open $S^\circ\subset S_{\sm}$ such that every
$\ell=\ell_s$, $s\in S^\circ$, has a unique lift
$\lambda_\ell\colon \ell\to\widehat Y$, defined on the whole line, with
\begin{equation}\label{eq:line-bound}
 0\le b_\ell:=\deg\lambda_\ell^*\calB_Y\le2y-m-2.
\end{equation}
\end{prop}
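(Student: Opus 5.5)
The plan is to take $S^\circ$ small enough that a general line meets $Y_{\sm}$ and the differential of $e$ has maximal rank along it. On such a line the lift comes from properness of the Grassmannian, the lower bound from global generation, and the upper bound from splitting $\lambda_\ell^*\calA_Y$ and counting first-order deformations of $\ell$ inside $S$.

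\emph{Choice of $S^\circ$ and the lift.} Since $\calW$ is integral and we are in characteristic $0$, generic smoothness applies to $e\colon\calW\to Y$. So there is a dense open $\calW^\circ\subset\calW$ on which $e$ is smooth onto $Y_{\sm}$, and there $de$ has rank $y$. The fibres of $p_S$ are irreducible, so $p_S(\calW\setminus\calW^\circ)$ is a proper closed subset of $S$. Let $S^\circ\subset S_{\sm}$ be the complement of this subset, further shrunk so that $S$ is smooth at $s$. For $s\in S^\circ$, a dense open set of points of $\ell=\ell_s$ lies in $Y_{\sm}$. On that open set, $x\mapsto\widehat T_{Y,x}$ defines a morphism to $\widehat Y$, because $\nu$ is an isomorphism over $Y_{\sm}$. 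Since $\ell$ is a smooth curve and $\widehat Y\subset Y\times\Gr(y+1,V)$ is proper, the valuative criterion extends this to $\lambda_\ell\colon\ell\to\widehat Y$. The extension is unique because $\widehat Y$ is separated and $\lambda_\ell$ is determined on a dense open set. The bound $b_\ell\ge0$ is immediate, since $\lambda_\ell^*\calB_Y$ is a globally generated bundle on $\PP^1$.

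\emph{Deformation-theoretic upper bound.} Write $L=\calO_\ell(-1)\subset V\otimes\calO_\ell$ for the tautological line, so that $V\otimes\calO_\ell/L\cong T_{\PP^n}(-1)|_\ell$. The quotient of this by $T_\ell(-1)\cong\calO_\ell(1)$ is $N_{\ell/\PP^n}(-1)\cong\calO_\ell^{\,n-1}$.
\begin{itemize}
\item At general points $\lambda_\ell^*\calA_Y$ contains $\widehat\ell$. Since it is a subbundle, it contains $L$ and the affine cone over $\ell$ everywhere.
\item Hence $K:=\lambda_\ell^*\calA_Y/(\text{cone of }\ell)$ is a rank $y-1$ subbundle of $\calO_\ell^{\,n-1}$ with quotient $\lambda_\ell^*\calB_Y$.
\item Writing $K\cong\bigoplus_{i=1}^{y-1}\calO(-a_i)$ with $a_i\ge0$, we get $b_\ell=\sum a_i$.
\end{itemize}
Now use the tangent space. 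We have $T_sS\subset T_s\Gr(2,n+1)=H^0(N_{\ell/\PP^n})$. At $(s,x)\in\calW^\circ$, the image of $de$ is $T_{Y,x}$, and this equals $T_x\ell$ plus the span of the values at $x$ of the sections in $T_sS$. Thus the evaluation map $T_sS\otimes\calO_\ell\to N_{\ell/\PP^n}$ lands in $K(1)$ at general points, hence everywhere because $K(1)$ is saturated. It is also generically surjective onto $K(1)$, since both sides have rank $y-1$ at points of $\calW^\circ$.

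\emph{Counting.} Generic surjectivity forces every summand $\calO(1-a_i)$ to have nonzero sections, so $a_i\le1$ for all $i$. Then
\[
m=\dim T_sS\le h^0(K(1))=\sum_{i=1}^{y-1}(2-a_i)=2y-2-b_\ell,
\]
where $T_sS\to H^0(K(1))$ is injective because it is the restriction of $T_sS\subset H^0(N_{\ell/\PP^n})$. Rearranging gives $b_\ell\le 2y-m-2$.

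The main obstacle is making the identification of $\lambda_\ell^*\calA_Y$ with the span of $\widehat\ell$ and the infinitesimal deformations hold along the whole line rather than only generically. I plan to handle this entirely by saturation: every inclusion is checked at general points of $\ell$, and then extended using that $\lambda_\ell^*\calA_Y$ is a subbundle of $V\otimes\calO_\ell$.
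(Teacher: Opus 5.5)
Your argument follows the paper's proof: your $K(1)$ is the paper's $M_\ell$, and the saturation step, the generic surjectivity of $T_sS\otimes\calO_\ell\to K(1)$ and the splitting count are the same. One step fails as written, namely the choice of $S^\circ$. The claim that $p_S(\calW\setminus\calW^\circ)$ is a proper closed subset of $S$ is false in general, because $\calW\setminus\calW^\circ$ can meet every fibre of $p_S$. Two examples:
\begin{enumerate}[(a)]
\item In the Fermat cone family of \S\ref{subsec:fermat} with $n\ge4$, every line passes through the vertex, which is a singular point of $Y$.
\item Take a pencil of lines through a point of a plane contained in $X$. Then $Y$ is smooth, but $e$ is the blow-up of the plane at that point. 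It is not smooth along the exceptional curve, which is a section of $p_S$.
\end{enumerate}
In both cases your $S^\circ$ is empty. Irreducibility of the fibres does not give what you claim. It only gives that a fibre which meets $\calW^\circ$ meets it in a dense open subset.

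The repair is to take $\calW^\circ\subset p_S^{-1}(S_{\sm})$ and set $S^\circ=p_S(\calW^\circ)$. This set is nonempty and open because the $\PP^1$-bundle $p_S$ is flat, hence open, and it is exactly how the paper chooses $S^\circ$. Your later steps only use that $\ell_s\cap\calW^\circ$ is dense in $\ell_s$: to extend the lift, to get $\widehat\ell\subset\widehat T_{Y,x}$ at general points, and to get generic surjectivity onto $K(1)$. So with this change the rest of your proof goes through unchanged. Your own sentence ``a dense open set of points of $\ell$ lies in $Y_{\sm}$'' states the correct property. Your stated definition would instead put the entire line in $Y_{\sm}$, which cannot be arranged in general. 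For singular swept varieties this is precisely why the Nash modification and the extension of the lift are needed.
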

\begin{proof}
By generic smoothness, $e$ maps a nonempty open
$\Omega\subset p_S^{-1}(S_{\sm})$ to $Y_{\sm}$ with differential
of rank $y$ \cite[III, \S10]{Hartshorne}. Since $p_S$ is open,
$S^\circ=p_S(\Omega)$ is nonempty and open. Each line parametrized
by $S^\circ$ meets $Y_{\sm}$, where $\nu$ is an isomorphism.
As $\ell$ is smooth, properness of $\nu$ extends its lift to the
whole line, and separatedness gives uniqueness
(Lemma~\ref{lem:extension}). Its image is the strict transform
of $\ell$, with no additional components over singular points.

Let $A\subset V$ be the two-dimensional vector subspace defining $\ell$, and put
$\calA_\ell=\lambda_\ell^*\calA_Y$ and
$\calB_\ell=\lambda_\ell^*\calB_Y$. At a smooth point of $Y$
on $\ell$, its projective tangent space contains the entire line.
Hence the map $A\otimes\calO_\ell\to\calB_\ell$ is zero
generically and therefore everywhere. The resulting inclusion
$A\otimes\calO_\ell\subset\calA_\ell$ has constant fiber
rank two, so its quotient is locally free. Define
\[
 M_\ell=(\calA_\ell/(A\otimes\calO_\ell))\otimes\calO_\ell(1).
\]
Quotienting \eqref{eq:nash} by $A\otimes\calO_\ell$ gives an exact
sequence of bundles on the complete line
\begin{equation}\label{eq:line-sequence}
 0\longrightarrow M_\ell\longrightarrow N_{\ell/\PP^n}
 \longrightarrow\calB_\ell(1)\longrightarrow0,
 \qquad N_{\ell/\PP^n}=(V/A)\otimes\calO_\ell(1).
\end{equation}
In particular $\rank M_\ell=y-1$ and
$\deg M_\ell=y-1-b_\ell$.

The differential of $S\hookrightarrow\Gr(2,n+1)$ identifies $T_sS$ with
a subspace of $H^0(N_{\ell/\PP^n})$. The characteristic map
evaluates these sections. For $x\in\Omega\cap\ell$, quotienting
the surjective differential $de$ by the fiber direction $T_x\ell$
gives $T_sS\twoheadrightarrow T_xY/T_x\ell=(M_\ell)_x$.
The composite with the locally free quotient in
\eqref{eq:line-sequence} is therefore generically zero, hence
everywhere zero. Thus $T_sS$ injects into $H^0(M_\ell)$ and
generates $M_\ell$ generically.

By Grothendieck's splitting theorem
\cite[Theorem~2.1]{Grothendieck57}, we can write
$M_\ell=\bigoplus_j\calO_\ell(a_j)$. If $a_j<0$, then
$H^0(\calO_\ell(a_j))=0$, so every global section of $M_\ell$
has zero projection to this summand, contradicting generic
generation. Hence $a_j\ge0$, and $M_\ell$ is generated everywhere
by $H^0(M_\ell)$, although the subspace $T_sS$ need not generate
at every point. The inclusion into $\calO_\ell(1)^{n-1}$ also
excludes $a_j>1$. Therefore
\begin{equation}\label{eq:M-splitting}
 M_\ell=\calO_\ell(1)^{y-1-b_\ell}\oplus\calO_\ell^{b_\ell},
 \qquad m\le h^0(M_\ell)=2y-2-b_\ell,
\end{equation}
which proves \eqref{eq:line-bound}. If $y=1$, $M_\ell=0$ and the
same injection gives $m=b_\ell=0$.
\end{proof}
\begin{rmk}
For a line $\ell$ meeting $Y_{\sm}$, the normal sheaf
$\calN_{\ell/Y}=\calHom(\calI_{\ell/Y}/\calI_{\ell/Y}^2,\calO_\ell)$
satisfies
\[
 \calN_{\ell/Y}=M_\ell,
 \qquad T_{[\ell]}F_1(Y)=H^0(M_\ell).
\]
Dualizing the conormal sequence of $\ell\subset Y\subset\PP^n$
shows that $\calN_{\ell/Y}$ is saturated in $N_{\ell/\PP^n}$.
It agrees with the saturated subbundle $M_\ell$ over
$\ell\cap Y_{\sm}$, hence everywhere. The tangent-space identity
is Grothendieck's formula \cite[Corollary~5.3]{FGAHilbert}
(see also \cite[Proposition~2.16]{AK}). For lines as in
Proposition~\ref{prop:line}, \eqref{eq:M-splitting} gives
$H^1(\calN_{\ell/Y})=0$. If $\ell\subset Y$ is not a regular
embedding, this vanishing alone does not imply that $F_1(Y)$ is
smooth at $[\ell]$.
\end{rmk}

\section{Schubert Calculus and a Quotient Inequality}\label{sec:quotient}
We prove a numerical inequality for a globally generated bundle
with a line-bundle quotient. The proof uses Chern operations on
Chow homology, their compatibility with proper pushforward, and
Whitney's formula over an arbitrary base
\cite[Chapter~3]{Fulton}. See also \cite[Tags \href{https://stacks.math.columbia.edu/tag/02U4}{02U4} and \href{https://stacks.math.columbia.edu/tag/02UI}{02UI}]{Stacks}.

\subsection{Nonnegativity on Singular Cycles}
We record a standard consequence of Schubert calculus and
Kleiman's transversality theorem.
\begin{lem}\label{lem:schur}
Let $E$ be a globally generated vector bundle of rank $r\ge1$ on a
projective scheme $Q$, and let $L$ be a globally generated line bundle on $Q$.
For an effective $q$-cycle $\alpha$ on $Q$, one has
\begin{align*}
 \deg\bigl(c_i(E)c_1(L)^{q-i}\cap\alpha\bigr)&\ge0
 &&(0\le i\le\min\{r,q\}),\\
 \deg\bigl((c_1(E)c_i(E)-c_{i+1}(E))c_1(L)^{q-i-1}
 \cap\alpha\bigr)&\ge0
 &&(1\le i\le r,\ i+1\le q).
\end{align*}
Here $c_{r+1}(E)=0$. Neither smoothness nor normality of the
cycle supports, nor reducedness of $Q$, is assumed.
\end{lem}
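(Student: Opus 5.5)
We reduce both inequalities to degrees of pulled-back Schubert classes on a Grassmannian, where Kleiman's transversality theorem gives nonnegativity. Since $E$ and $L$ are globally generated, choose finite-dimensional spaces of sections $W\otimes\calO_Q\twoheadrightarrow E$ and $U\otimes\calO_Q\twoheadrightarrow L$. These define a morphism
\[
 \phi=(\phi_E,\phi_L)\colon Q\to G:=\Gr(W,r)\times\PP(U^\vee),
\]
where $\Gr(W,r)$ is the Grassmannian of $r$-dimensional quotients of $W$. Under $\phi$, $E=\phi_E^*\calQ$ with $\calQ$ the universal quotient, and $L=\phi_L^*\calO(1)$. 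Chern classes commute with pullback, and the projection formula holds for the proper map $\phi$ in Fulton's Chow homology, with no smoothness or reducedness of $Q$ needed. Hence each degree in the statement equals $\deg(P(c(\calQ),h)\cap\phi_*\alpha)$, where $P$ is the relevant polynomial and $h=c_1(\calO(1))$.

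The cycle $\phi_*\alpha$ is an effective $q$-cycle on the smooth homogeneous variety $G$, possibly zero if $\phi$ contracts components. It therefore suffices to show that the classes $c_i(\calQ)h^{q-i}$ and $(c_1c_i-c_{i+1})(\calQ)h^{q-i-1}$ are represented by effective cycles in the appropriate codimension $q$, or more precisely by nonnegative combinations of Schubert-type subvarieties. Kleiman's theorem then applies under the transitive action of $\GL(W)\times\GL(U)$. For any subvariety $Z$ of dimension $q$ and a general translate $gZ'$ of a codimension-$q$ subvariety $Z'$, the intersection is a finite reduced set, so its degree is $\ge0$. Applying this to each component of $\phi_*\alpha$ and summing with nonnegative coefficients gives the result.

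It remains to identify the effective representatives.
\begin{enumerate}[(a)]
\item $c_i(\calQ)$ is the special Schubert class $\sigma_i$, which is effective.
\item $h^{k}$ is a linear subspace class on $\PP(U^\vee)$, and a product of effective classes on the two factors is the class of an effective product cycle.
\item By Pieri's rule, $c_1c_i=\sigma_1\sigma_i=\sigma_{i+1}+\sigma_{i,1}$ for $i<r$, where $\sigma_{i,1}$ is absent when the rank of the relevant bundle forbids it. Thus $c_1c_i-c_{i+1}=\sigma_{i,1}$, a single Schubert class, which is effective or zero.
\item For $i=r$, $c_{r+1}=0$ and $c_1c_r=\sigma_1\sigma_r$ is effective by Pieri's rule.
\end{enumerate}
We must use the correct convention: Chern classes of the universal quotient are the special Schubert classes $\sigma_i$. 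The degree conditions $i\le\min\{r,q\}$ and $i+1\le q$ guarantee that the total codimension equals $q$, so no negative powers of $h$ occur.

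The main obstacle is being careful that everything works for singular and nonreduced $Q$. Chern operations, pullback along $\phi$ via the identities $\phi^*c_i(\calQ)=c_i(E)$, and the projection formula $\deg(\phi^*c\cap\alpha)=\deg(c\cap\phi_*\alpha)$ all hold in Fulton's framework for arbitrary schemes. On the smooth variety $G$, intersecting a cycle with a Schubert class representative agrees with the Chern class operation, because $G$ is smooth and refined intersection with a generic translate computes the product. Once this bookkeeping is in place, the positivity is standard.
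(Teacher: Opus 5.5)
Your proposal is correct and follows essentially the paper's proof: you realize $E$ and $L$ as pullbacks along a map to $\Gr\times\PP^M$, identify $c_i(\calQ)=\sigma_{(i)}$ and $c_1c_i-c_{i+1}=\sigma_{(i,1)}$ via Pieri, transfer the degrees to $\Phi_*\alpha$ by the projection formula, and conclude by Kleiman transversality for a general translate of a Schubert variety times a linear subspace. The differences are cosmetic. The paper adjoins zero sections to force $N-r\ge2$, so that $\sigma_{(i,1)}$ is a genuine Schubert class, where you simply allow it to vanish. The paper also says explicitly that the general translate misses the singular loci, which your ``finite reduced set'' claim uses implicitly; nonnegativity would in any case follow from positivity of proper-intersection multiplicities.
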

\begin{proof}
Choose a generating map $\CC^N\otimes\calO_Q\twoheadrightarrow E$.
It defines $f\colon Q\to\Gr(N-r,N)$ with $E=f^*\calQ$,
where $\calQ$ is the universal rank-$r$ quotient.
By adjoining zero sections, we may assume that $s=N-r\ge2$. A generating space for
$L$ similarly defines $\psi\colon Q\to\PP^M$ with
$L=\psi^*\calO_{\PP^M}(1)$. Write $\Phi=(f,\psi)$.

We recall the convention for Schubert classes used here. For a complete flag $F_\bullet$
in $\CC^N$ and a partition $\lambda$ with at most $s$ parts, each
at most $r$, let
\[
 \Omega_\lambda=
 \{K\in\Gr(s,N):\dim(K\cap F_{r+j-\lambda_j})\ge j
 \text{ for }1\le j\le s\}.
\]
Its codimension is $|\lambda|$, and its effective fundamental
class is denoted by $\sigma_\lambda$. The special Schubert and
Giambelli formulas give
\begin{equation}\label{eq:schubert}
 c_i(\calQ)=\sigma_{(i)},\qquad
 c_1(\calQ)c_i(\calQ)-c_{i+1}(\calQ)
 =\sigma_{(i,1)}.
\end{equation}
The second equality is the determinant
$\left|\begin{smallmatrix}c_i&c_{i+1}\\1&c_1\end{smallmatrix}\right|$.
Equivalently, it is the special case
$\sigma_{(1)}\sigma_{(i)}=\sigma_{(i+1)}+\sigma_{(i,1)}$
of Pieri's formula, with the first term zero when $i=r$.
These are identities in the Chow ring of the Grassmannian
\cite[\S14.7]{Fulton}, \cite[p.~36]{FL}.
In Chern-root notation the partitions are transposed, so
$c_i=e_i=s_{(1^i)}$ and the second class is $s_{(2,1^{i-1})}$.
Thus both classes in \eqref{eq:schubert} have effective
Schubert representatives.

For either class $\sigma$ in \eqref{eq:schubert}, of codimension
$j\le q$, put $H_{\PP}=c_1(\calO_{\PP^M}(1))$. The projection
formula gives
\begin{equation}\label{eq:ambient-positivity}
 \deg\bigl(c_1(L)^{q-j}f^*\sigma\cap\alpha\bigr)
 =\deg\bigl(\pr_{\Gr(N-r,N)}^*\sigma\,
 \pr_{\PP}^*H_{\PP}^{q-j}\cap\Phi_*\alpha\bigr).
\end{equation}
Here $f^*\sigma$ denotes the corresponding operational Chern
polynomial. No flatness of $f$ is asserted.
The proper pushforward $\beta=\Phi_*\alpha$ is effective or zero.
A component whose image has smaller dimension contributes zero.
Otherwise, it contributes its image with positive function-field degree.
If $q-j>M$, the right side is zero. Otherwise its class is
represented by the product of a Schubert variety and a linear
subspace of codimension $q-j$.

The group $\GL_N\times\PGL_{M+1}$
acts transitively on $\Gr(N-r,N)\times\PP^M$. A general translate of
this product meets every component of $\beta$ properly in
dimension zero. It avoids the singular loci of both factors of
the intersection, since their expected intersection dimensions
are negative. In characteristic zero the remaining intersections
are transverse on the smooth loci
\cite[Theorem~2 and Corollary~8]{Kleiman}.
The resulting zero-cycle has nonnegative multiplicities.
Changing the flag and the linear subspace preserves their Chow
classes, so \eqref{eq:ambient-positivity} is nonnegative.
The case $i=0$ follows from the same argument with the unit
Schubert class. For nonreduced schemes, we apply the argument
to the integral cycle supports with their positive
multiplicities. This proves the lemma.
\end{proof}

\subsection{The Numerical Inequality}
\begin{prop}\label{prop:numerical}
Let $W$ be a projective variety of dimension $k$, possibly singular.
Let $L$ be a globally generated line bundle and $B$ a globally generated
vector bundle on $W$ of rank $1\le r\le k$.
Suppose $D$ is an effective Cartier divisor on $W$ and $b$ is an
integer such that
\[
 \mu=\int_Wc_1(L)^k>0,\qquad
 \det B=L^{\otimes b}(-D).
\]
Then $b\ge0$, and for every integer $a>b$,
\begin{equation}\label{eq:gap}
 \int_Wc_1(L)^{k-r}c_r(B^\vee\otimes L^{\otimes a})
 \ge a^{r-1}(a-b)\mu>0.
\end{equation}
Consequently, if $B\twoheadrightarrow L^{\otimes a}$ for an integer
$a>0$, then $b\ge a$.
\end{prop}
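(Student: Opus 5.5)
Write $h=c_1(L)$, $c_i=c_i(B)$ and $\delta=[D]$, so that $c_1=bh-\delta$. The plan is to expand $c_r(B^\vee\otimes L^{\otimes a})$ and control the terms with Lemma~\ref{lem:schur}, applied with $E=B$ and the globally generated line bundle $L$.

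First we show $b\ge0$. By Lemma~\ref{lem:schur} with $i=1$ and $\alpha=[W]$ (here $r\ge1$, $k\ge1$), we have $\int h^{k-1}c_1\ge0$. Since $\delta$ is effective and $h$ is nef, $\int h^{k-1}\delta\ge0$. Hence $b\mu=\int h^{k-1}(c_1+\delta)\ge0$. As $\mu>0$, this gives $b\ge0$.

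Next comes the main expansion. We have
\[
c_r(B^\vee\otimes L^{a})=\sum_{i=0}^r(-1)^i a^{r-i}c_i h^{r-i},
\]
so we need a lower bound for $P(a)=\sum_i(-1)^ia^{r-i}x_i$, where $x_i=\int h^{k-i}c_i\ge0$, $x_0=\mu$ and $x_1=b\mu-\int h^{k-1}\delta\le b\mu$. The second inequality of Lemma~\ref{lem:schur} gives $\int h^{k-i-1}(c_1c_i-c_{i+1})\ge0$. The difficulty is that $c_1c_i$ is not a multiple of $h\,c_i$. Substituting $c_1=bh-\delta$ gives
\[
x_{i+1}\le b\,x_i-\int h^{k-i-1}\delta\, c_i .
\]
The term $\int h^{k-i-1}c_i\cap[D]$ is nonnegative by Lemma~\ref{lem:schur} applied to the effective cycle $[D]$ of dimension $k-1$, since the restrictions of $B$ and $L$ to $D$ are still globally generated. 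Therefore $x_{i+1}\le b\,x_i$ for all $i$. This is the key step, and the determinant-comparison hypothesis is exactly what makes it work.

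Given $0\le x_{i+1}\le b x_i$, we group $P(a)$ in consecutive pairs:
\[
a^{r-i}x_i-a^{r-i-1}x_{i+1}\ge a^{r-i-1}(a-b)x_i\ge0 \qquad (i \text{ even}).
\]
The $i=0$ pair alone gives $a^{r-1}(a-b)\mu$. If $r$ is odd, a final unpaired term $+x_r$ with $r$ even index remains, and it is $\ge0$. This yields \eqref{eq:gap}, with strict positivity because $a>b\ge0$ and $\mu>0$. The only bookkeeping to watch is that every term for $i\le r$ is covered: if $r$ is even the pairs $(i,i+1)$ with $i$ even end exactly at $r$; if $r$ is odd the last term $x_r$ stands alone with a plus sign.

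For the consequence, a surjection $B\twoheadrightarrow L^{\otimes a}$ gives a nowhere-vanishing section of $B^\vee\otimes L^{\otimes a}$. Its top Chern class $c_r$ is therefore zero, so the left side of \eqref{eq:gap} vanishes. If we had $a>b$, this would contradict \eqref{eq:gap}; hence $b\ge a$. I expect the main obstacle to be justifying the use of Lemma~\ref{lem:schur} on $D$ (possibly nonreduced) with the correct dimension count. The lemma explicitly allows this, provided $i+1\le k$, which holds since $r\le k$.
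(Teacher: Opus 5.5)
Your proof is correct and follows the paper's argument essentially step by step: you bound $x_{i+1}\le b\,x_i$ using Lemma~\ref{lem:schur} on both $[W]$ and $[D]$, pair consecutive terms of the expansion of $c_r(B^\vee\otimes L^{\otimes a})$, and note that a surjection forces this top Chern class to vanish. One small slip is that your parity bookkeeping is reversed: the unpaired term $+x_r$ appears when $r$ is even, while for $r$ odd the pairs $(i,i+1)$ with $i$ even exhaust all indices; this does not affect the conclusion.
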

\begin{proof}
Set $h=c_1(L)$ and $I_i=\int_W h^{k-i}c_i(B)$, $0\le i\le r$.
We regard $\delta=c_1(\calO_W(D))$ as an operational Chow class. Then
$c_1(B)=bh-\delta$ and $\delta\cap[W]=[D]$, with the inclusion
of $D$ understood. Lemma~\ref{lem:schur} and the projection formula give
\[
 0\le I_{i+1}\le\int_W h^{k-i-1}c_1(B)c_i(B)
 =bI_i-\int_Dh^{k-i-1}c_i(B|_D)\le bI_i.
\]
Here $h$ on $D$ means its restriction. For $i=0$ the middle
inequality is equality. For $1\le i<r$ it is the inequality
$c_1c_i-c_{i+1}\ge0$ after taking degree, established in
Lemma~\ref{lem:schur}. All exponents are
nonnegative, and the terms on $D$ are nonnegative on its effective
fundamental cycle. Since $I_0=\mu>0$, the case $i=0$ gives $b\ge0$.

Fix an integer $a>b$, so $a>0$. The splitting principle gives
\[
 \int_W h^{k-r}c_r(B^\vee\otimes L^a)
 =\sum_{i=0}^r(-1)^i a^{r-i}I_i.
\]
For each even $i<r$, the adjacent pair is
$a^{r-i-1}(aI_i-I_{i+1})\ge a^{r-i-1}(a-b)I_i\ge0$.
The first pair is at least $a^{r-1}(a-b)\mu$. In even rank the
remaining term is $+I_r\ge0$. This proves \eqref{eq:gap}.

Now suppose that $B\twoheadrightarrow L^a$ for an integer $a>0$.
Dualizing and twisting gives a trivial line subbundle of
$B^\vee\otimes L^a$ with locally free quotient of rank $r-1$.
Thus $c_r(B^\vee\otimes L^a)=0$. If $a>b$, this contradicts
\eqref{eq:gap}. Hence $b\ge a$.
\end{proof}

\begin{rmk}
The rank and top self-intersection hypotheses are necessary. On $\PP^1$,
$(s^a,t^a)\colon \calO_{\PP^1}^2\twoheadrightarrow\calO_{\PP^1}(a)$
has source determinant degree zero, but its rank exceeds the dimension.
Also, a globally generated line bundle pulled back from a lower-dimensional
variety has zero top self-intersection. The proof requires positive
$I_0$, while the remaining Chern intersections may vanish.
\end{rmk}

\section{Determinant Comparison over a Normal Base}\label{sec:pointed}
We prove the determinant comparison needed to apply
Proposition~\ref{prop:numerical} to a family of lines through a
fixed point. We normalize a graph closure, retaining entire general
fibers, and compare divisors on the projective-line bundle.

\subsection{Extension and Normalization}
\begin{lem}\label{lem:extension}
Let $U$ be a normal variety and let $P$ be a projective scheme. A rational
map $\psi\colon U\dashrightarrow P$ is defined at every codimension-one
point of $U$. Its maximal domain is open, with complement of
codimension at least two. If $\pi\colon W\to U$ is a birational morphism
with $W$ a projective variety, then $\pi$ is an isomorphism over a
neighborhood of each codimension-one point of $U$.
\end{lem}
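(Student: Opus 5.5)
The three assertions are standard facts about normal varieties. I would prove them in order, using only that local rings of a normal variety at codimension-one points are discrete valuation rings, together with the valuative criterion of properness and Zariski's Main Theorem.

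\emph{Extension in codimension one.} Let $\eta\in U$ be a codimension-one point. Since $U$ is normal, $R=\calO_{U,\eta}$ is a discrete valuation ring with fraction field $K=\CC(U)$. The rational map $\psi$ gives a $K$-point of $P$. Because $P$ is projective, hence proper, the valuative criterion extends it uniquely to a morphism $\Spec R\to P$. Since $P$ is of finite type, this morphism spreads out to a morphism on some open neighborhood of $\eta$. That morphism agrees with $\psi$ on a dense open set, so $\psi$ is defined at $\eta$.

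\emph{The domain.} By definition, the maximal domain of $\psi$ is the union of all open sets on which a representative of $\psi$ is defined. These representatives glue, because $P$ is separated and $U$ is reduced. The domain is therefore open. Its complement is closed and, by the previous step, contains no codimension-one point. Every irreducible component of the complement thus has codimension at least two.

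\emph{The birational morphism.} Let $\pi\colon W\to U$ be birational with $W$ projective, so that $\pi$ is proper. Its inverse $\pi^{-1}\colon U\dashrightarrow W$ is a rational map to a projective scheme. By the first step, it is defined on an open $U_1$ containing every codimension-one point. Set $\sigma=\pi^{-1}|_{U_1}$, so that $\pi\circ\sigma=\mathrm{id}_{U_1}$ as rational maps. Since $U$ is separated, this equality holds as morphisms. It remains to show that $\sigma\circ\pi=\mathrm{id}$ on $\pi^{-1}(U_1)$. Both are morphisms $\pi^{-1}(U_1)\to W$ that agree on a dense open subset, and $W$ is separated, so they agree on the reduced scheme $\pi^{-1}(U_1)$. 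Hence $\pi$ is an isomorphism over $U_1$.

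\emph{Main obstacle.} The only subtle point is checking that $\sigma\circ\pi$ lands in $\pi^{-1}(U_1)$ and equals the identity there. This uses that $W$ is integral, so $\pi^{-1}(U_1)$ is integral and the dense open set on which the two maps agree is dense in it. Should this fail, an alternative is Zariski's Main Theorem. Near $\eta$, the morphism $\pi$ is proper and birational onto a normal target. The fiber over $\eta$ is connected, and by dimension count it is finite, since a positive-dimensional fiber over a codimension-one point would give an exceptional divisor whose image has smaller dimension. But the section $\sigma$ forces the fiber to be a single point, and then quasi-finiteness, properness and normality give an isomorphism.
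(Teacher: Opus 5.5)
Your proof is correct. For the first two assertions you reprove what the paper simply cites (Stacks Tags 0BX7 and 0A1Y). You use the valuative criterion over the discrete valuation ring $\calO_{U,\eta}$, spread out because $P$ is of finite type, and glue using separatedness of $P$ and reducedness of $U$. This is the same argument in substance.

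For the third assertion your route differs from the paper's. The paper notes that $\pi$ is proper and quotes a general Stacks lemma on proper birational morphisms (Tag 0BFP(2)), applied at each codimension-one point. You instead apply the first assertion to the inverse rational map $\pi^{-1}\colon U\dashrightarrow W$, which is legitimate because $W$ is projective. You then check, by separatedness of $U$ and $W$ and reducedness of the sources, that $\pi|_{\pi^{-1}(U_1)}$ and $\sigma$ are mutually inverse. What your approach buys: it is elementary and self-contained, and it gives an explicit isomorphism locus. Namely, $\pi$ is an isomorphism over the whole domain of $\pi^{-1}$, whose complement has codimension at least two by the second assertion. This is exactly the form in which the lemma is used later, in Lemma~\ref{lem:graph} and Proposition~\ref{prop:pointed}. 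What the paper's citation buys: it is shorter and does not pass through the inverse rational map.

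The ``main obstacle'' you flag is already settled by your own argument. The identity $\pi\circ\sigma=\mathrm{id}_{U_1}$ gives $\sigma(U_1)\subset\pi^{-1}(U_1)$. The dense open $W_0\subset W$ that maps isomorphically onto $U_0$ lies in $\pi^{-1}(U_1)$ and is dense there because $W$ is integral. So the fallback via Zariski's Main Theorem is unnecessary. As sketched it is also loose: a section alone does not make a fiber a single point; that would come from connectedness together with finiteness.
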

\begin{proof}
The extension assertion follows from
\cite[\href{https://stacks.math.columbia.edu/tag/0BX7}{Tag 0BX7}]{Stacks},
since $P$ is proper and the codimension-one local rings of $U$
are discrete valuation rings. The extensions glue uniquely on the
maximal domain by separatedness of $P$
\cite[\href{https://stacks.math.columbia.edu/tag/0A1Y}{Tag 0A1Y}]{Stacks}.
Since $W$ is projective and $U$ is separated, $\pi$ is proper, so
the last assertion follows from
\cite[\href{https://stacks.math.columbia.edu/tag/0BFP}{Tag 0BFP}(2)]{Stacks}.
\end{proof}

\begin{lem}\label{lem:graph}
Let $Z$ be a normal projective variety of dimension $k-1$ with $H$ a line bundle on $Z$. Denote by $p\colon U=\PP_Z(\calO_Z\oplus H^{-1})\to Z$ the projective bundle. Suppose a morphism $e_U\colon U\to Y$ maps the
entire section $Z_0$ defined by $\calO_Z$ to $x\in Y_{\sm}$.
There is a normal projective variety $W$ with morphisms $\pi\colon W\to U$ and
$g\colon W\to\widehat Y$ such that $\pi$ is birational,
$\nu g=e_U\pi$, and $\pi$ is an isomorphism near all of $Z_0$
and on $p^{-1}(Z^\circ)$ for some dense open $Z^\circ\subset Z$.
Moreover $g|_{Z_0}$ is constant.
\end{lem}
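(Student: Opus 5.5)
The plan is to take $W$ to be the normalization of a graph closure of the rational lift of $e_U$ through $\nu$. The one nontrivial point is that the graph closure does not blow up $Z_0$; normality of $U$ together with Lemma~\ref{lem:extension} handles this.

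First, $U$ is a projective bundle over the normal projective variety $Z$, so it is a normal projective variety. Let $U^\circ=e_U^{-1}(Y_{\sm})$. This set is open and contains $Z_0$, because $e_U(Z_0)=x\in Y_{\sm}$. Over $Y_{\sm}$ the map $\nu$ is an isomorphism. Hence $\nu^{-1}\circ e_U$ is a morphism $U^\circ\to\widehat Y$, and so a rational map $\psi\colon U\dashrightarrow\widehat Y$ defined on the open neighborhood $U^\circ$ of $Z_0$. I would let $\Gamma\subset U\times\widehat Y$ be the closure of the graph of $\psi|_{U^\circ}$, with its reduced structure. It is a projective variety, and its projection $\Gamma\to U$ is birational and an isomorphism over $U^\circ$. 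Indeed, over $U^\circ$ the graph of a morphism to a separated scheme is closed in $U^\circ\times\widehat Y$, so $\Gamma$ agrees with it there. Then let $W$ be the normalization of $\Gamma$. This is finite, so $W$ is a normal projective variety. Let $\pi$ and $g$ be the composites with the two projections.

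Second, I would check the stated properties. The map $\pi$ is birational. Over $U^\circ$ the variety $\Gamma$ is isomorphic to the normal variety $U^\circ$, so the normalization changes nothing there. Thus $\pi$ is an isomorphism over the neighborhood $U^\circ$ of $Z_0$. The identity $\nu g=e_U\pi$ holds on the dense open set $\pi^{-1}(U^\circ)$, because there $g=\nu^{-1}e_U\pi$. Since $Y$ is separated and $W$ is reduced, it then holds everywhere. On $Z_0\subset U^\circ$ we have $g=\nu^{-1}\circ e_U$. Since $e_U$ is constant equal to $x$ there, $g|_{Z_0}$ is constant.

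Third, for the dense open $Z^\circ$, I would use Lemma~\ref{lem:extension} applied to $\pi\colon W\to U$. It says that $\pi$ is an isomorphism over an open set $U'\subset U$ whose complement $C$ has codimension at least two. The closed set $p(C)\subset Z$ has dimension at most $\dim C\le k-2<k-1=\dim Z$, so it is a proper closed subset. Put $Z^\circ=Z\setminus p(C)$. Then $p^{-1}(Z^\circ)\subset U'$, so $\pi$ is an isomorphism on $p^{-1}(Z^\circ)$.

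The main subtlety is this last step. It requires that the exceptional locus of $\pi$ have codimension at least two, so that it cannot dominate $Z$. This is exactly where normality of $U$, and hence of $Z$, is used.
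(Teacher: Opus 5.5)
Your proposal is correct and is essentially the paper's proof: $W$ is the normalization of the reduced graph closure of the lift $\psi$, which is an isomorphism over $e_U^{-1}(Y_{\sm})\supset Z_0$ because $U$ is normal, and $\nu g=e_U\pi$ is obtained by agreement on a dense open set. The only difference is that the paper gets the codimension-two bound from the indeterminacy locus $J$ of $\psi$ (first clause of Lemma~\ref{lem:extension}), while you apply the birational-morphism clause to $\pi$ directly. The two approaches are equivalent, since the isomorphism locus of $\pi$ is exactly $U\setminus J$.
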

\begin{proof}
The projective bundle $U$ is normal. The rational lift
$\psi\colon U\dashrightarrow\widehat Y$ is defined on
$e_U^{-1}(Y_{\sm})$, which contains all of $Z_0$. By
Lemma~\ref{lem:extension}, its indeterminacy locus $J$ has
dimension at most $k-2$. Since $p$ is proper, $p(J)$ is closed
of dimension less than $\dim Z$. For $k=1$, $J$ is empty.
Thus $Z^\circ=Z\setminus p(J)$ is dense open and
$p^{-1}(Z^\circ)\subset U\setminus J$.

Let $W$ be the normalization of the reduced graph closure of
$\psi$, with projections $\pi,g$. Finiteness of normalization
\cite[\href{https://stacks.math.columbia.edu/tag/0BXR}{Tag 0BXR}]{Stacks}
makes $W$ projective. Over $U\setminus J$, the graph is already
normal, so $\pi$ is an isomorphism there. This open contains
$Z_0$ and $p^{-1}(Z^\circ)$. The equality $\nu g=e_U\pi$ holds
by agreement on a dense open. Since $\nu$ is an isomorphism
over $x$, the restriction $g|_{Z_0}$ is constant.
\end{proof}

\subsection{Projective-Bundle and Picard Formulas}
We use the notation for the projective bundle and its section from
Lemma~\ref{lem:graph}, allowing $Z$ to be any integral scheme,
and put $\xi=\calO_U(1)$.
The dual tautological map gives
$p^*(\calO_Z\oplus H)\twoheadrightarrow\xi$, so $\xi$ is
globally generated when $H$ is. Write $\tau=c_1(\xi)$ and
$u=p^*c_1(H)$. The tautological line has first Chern class
$-\tau$, and its quotient has first Chern class $\tau-u$.
Whitney's formula therefore gives $\tau^2=\tau u$ as Chern
operations. Since $\xi$ has degree one on each fiber,
$p_*(\tau\cap[U])=[Z]$. Thus, when $Z$ is proper of dimension $k-1$,
\[
 \int_U\tau^k=\int_U\tau u^{k-1}
 =\int_Zc_1(H)^{k-1}.
\]
These are projective-bundle formulas on Chow homology, valid
without smoothness \cite[Chapter~3]{Fulton}.

For a line bundle $M$ on $U$, its degree $b$ on the geometric
fibers is locally constant. Indeed, $p$ is flat and projective
and $M$ is flat over $Z$, so its fiber Hilbert polynomial is locally
constant. Since $Z$ is connected, $b$ is constant.
The bundle $M'=M\otimes\xi^{-b}$ restricts to $\calO_{\PP^1}$
on every geometric fiber. Cohomology and base change yield a
line bundle $P=p_*M'$ and an isomorphism $p^*P\to M'$.
This map is an isomorphism on each fiber, hence everywhere.
Here the fiber dimensions are $h^0=1$ and $h^1=0$. See
\cite[III, \S12]{Hartshorne}. Restricting to $Z_0$ identifies
$P=M|_{Z_0}$, since $\xi|_{Z_0}=\calO_{Z_0}$. Thus
\begin{equation}\label{eq:picard}
 M\simeq\xi^b\otimes p^*(M|_{Z_0}).
\end{equation}
This proof requires neither normality nor smoothness of $Z$.
It applies as well to an integral open subset of the base.

\subsection{A Weil Divisor Avoiding the Section}
The following lemma shows that the pushforward of the determinant
divisor is Cartier.
\begin{lem}\label{lem:vertex-divisor}
Let $Z$ be a normal variety, let $H$ be a line bundle on $Z$,
and set $U=\PP_Z(\calO_Z\oplus H^{-1})$, with section $Z_0$
and $\xi=\calO_U(1)$ as above. If $C$ is an effective Weil divisor on $U$
whose support is disjoint from $Z_0$, then $C$ is an effective
Cartier divisor and
\[
 \calO_U(C)\simeq\xi^q,
\]
where $q$ is the degree of its divisor on the generic fiber.
\end{lem}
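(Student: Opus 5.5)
The plan is to compare $C$ with a multiple of the ``section at infinity'' and then show that the difference is principal, using only Chow-group statements valid over singular bases. Let $Z_\infty\subset U$ be the section defined by the quotient $\calO_Z\oplus H\twoheadrightarrow H$. The component $\calO_Z\to\calO_Z\oplus H$ composed with the dual tautological surjection $p^*(\calO_Z\oplus H)\twoheadrightarrow\xi$ gives a global section of $\xi$. This section vanishes exactly where the tautological line lies in $H^{-1}$, that is, scheme-theoretically along $Z_\infty$. Since $Z_\infty$ is disjoint from $Z_0$, we get that $Z_\infty$ is an effective Cartier divisor with $\calO_U(Z_\infty)\simeq\xi$, and it has degree one on every fiber. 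The lemma therefore reduces to showing that the Weil divisor $C-qZ_\infty$ is principal. Once that is known, $C$ is linearly equivalent to a Cartier divisor, hence locally principal, hence an effective Cartier divisor with $\calO_U(C)\simeq\xi^q$. Note that $U$ is normal because it is a $\PP^1$-bundle over the normal variety $Z$.

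Next, restrict to the open set $A=U\setminus Z_\infty$. This is the total space of a line bundle over $Z$, with zero section $s\colon Z\to A$ whose image is $Z_0$. We have $\dim A=k$, so Weil divisor classes on $A$ form $A_{k-1}(A)$. Flat pullback $p^*\colon A_{k-2}(Z)\to A_{k-1}(A)$ is surjective by \cite[Proposition~1.9]{Fulton}. The Gysin map $s^*$ for the regular codimension-one embedding $s$ satisfies $s^*p^*=\mathrm{id}$ \cite[Chapter~6]{Fulton}. Together these show that $p^*$ is an isomorphism with inverse $s^*$, with no smoothness assumption on $Z$.

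The support of $C|_A$ is disjoint from $s(Z)=Z_0$, so $s^*[C|_A]=0$. Hence $[C|_A]=0$ in $A_{k-1}(A)$, which means $C|_A=\mathrm{div}_A(f)$ for some rational function $f$ on $U$. On all of $U$, the divisor $\mathrm{div}(f)-C$ is then supported on the irreducible divisor $Z_\infty$, so $\mathrm{div}(f)=C-jZ_\infty$ for some integer $j$. To find $j$, restrict to the generic fiber $\PP^1_{K(Z)}$: principal divisors have degree zero there, $C$ has degree $q$, and $Z_\infty$ has degree one. This forces $j=q$, so $C\sim qZ_\infty$, as required.

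I expect the main obstacle to be the middle step: justifying $A_{k-1}(A)\cong A_{k-2}(Z)$ and the identity $s^*p^*=\mathrm{id}$ for a possibly singular normal $Z$. One must check that the Gysin pullback for the Cartier divisor $Z_0\subset A$ is compatible with flat pullback and kills classes supported away from $Z_0$. Both facts are standard in \cite[Chapters~2 and~6]{Fulton}. If that route causes trouble, my fallback is to argue locally on trivializing opens $V\subset Z$. There $A|_V\simeq V\times\mathbb{A}^1$, and I would use $\mathrm{Cl}(V\times\mathbb{A}^1)=\mathrm{Cl}(V)$ together with the normality of $Z$ to glue the resulting local equations.
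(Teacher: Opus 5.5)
Your proof is correct, and it takes a genuinely different route from the paper.

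\textbf{The paper's route.} The paper works locally over a trivializing $\Spec R\subset Z$, in the affine chart $U\setminus Z_0$. Each prime component $C_j$ misses $Z_0$, which is placed at $w=0$, so the coordinate $\bar z$ is integral over $R$ and $C_j$ is finite over $\Spec R$. Normality of $R$ then puts the monic minimal polynomial $f_j$ of $\bar z$ in $R[z]$, and $f_j$ generates $\mathfrak p_j$. Hence $C$ is locally cut out by $\prod_j f_j^{m_j}$. The identification $\calO_U(C)\simeq\xi^q$ comes from the Picard formula \eqref{eq:picard} and $\calO_U(C)|_{Z_0}\simeq\calO_{Z_0}$.

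\textbf{Your route.} You work on the complementary open set $U\setminus Z_\infty$, the total space of $H^{-1}$ with zero section $Z_0$. You use homotopy invariance $A_{k-2}(Z)\cong A_{k-1}(U\setminus Z_\infty)$ \cite[Proposition~1.9]{Fulton}, with inverse the Gysin map $s^*$. Since $s^*$ kills cycles disjoint from $Z_0$, $C$ is principal there. You then fix the coefficient along $Z_\infty$ by the generic-fiber degree.

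\textbf{What each buys.} Your approach gives an explicit global linear equivalence $C\sim qZ_\infty$, where $Z_\infty$ is the zero divisor of a section of $\xi$. It bypasses both the integrality argument and the Picard formula. It also isolates the use of normality in one final step: on a normal variety, a Weil divisor that is locally the divisor of a rational function with nonnegative orders is an effective Cartier divisor. The cost is reliance on Fulton's Chow-group machinery, namely flat pullback to vector bundles and Gysin maps for Cartier divisors. The paper's argument is elementary and local. It also shows that every component of $C$ is finite over $Z$ and is locally defined by a monic polynomial.

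One small correction: $Z_\infty$ is an effective Cartier divisor because it is the zero scheme of a nonzero section of $\xi$ on the integral scheme $U$. Its disjointness from $Z_0$ is not the reason.
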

\begin{proof}
The variety $U$ is normal because polynomial rings over normal
domains are normal
\cite[\href{https://stacks.math.columbia.edu/tag/030A}{Tag 030A}]{Stacks}.
Work over an affine open $\Spec R\subset Z$ trivializing $H$,
and put $K=\Frac R$. Place $Z_0$ at infinity in $\PP^1_R$.
Write $C=\sum_j m_jC_j$ locally, with prime ideals
$\mathfrak p_j\subset R[z]$.
Let $J_j\subset R[w]$ define $C_j$ in the chart at infinity,
where $w=1/z$ on the overlap. Since $C_j$ avoids $w=0$,
we have $J_j+(w)=R[w]$. Choose $g_j(w)\in J_j$ with $g_j(0)=1$.
On the overlap, $g_j(1/z)\in\mathfrak p_jR[z,z^{-1}]$, so for
sufficiently large $N\ge1$ the monic polynomial $z^Ng_j(1/z)$
belongs to $\mathfrak p_j$. Thus the image $\bar z$ in
$R[z]/\mathfrak p_j$ is integral over $R$, and this quotient
is a finite $R$-module. Since $\dim C_j=\dim R$, each $C_j$
dominates $\Spec R$.

Normality of $R$ is used to ensure that the monic minimal
polynomial $f_j\in K[z]$ of $\bar z$ has coefficients in $R$
\cite[\href{https://stacks.math.columbia.edu/tag/00H7}{Tag 00H7}]{Stacks}.
For $h\in\mathfrak p_j$, division by the monic $f_j$ gives
$h=af_j+r$ with $a,r\in R[z]$ and $\deg r<\deg f_j$.
Evaluating at $\bar z$ gives $r(\bar z)=0$, so minimality of
$f_j$ over $K$ forces $r=0$. Since $f_j\in\mathfrak p_j$,
it follows that $\mathfrak p_j=(f_j)$.
Thus $C$ is defined locally by $\prod_j f_j^{m_j}$ away from
$Z_0$ and is zero near $Z_0$, so it is an effective Cartier divisor.

Since $\calO_U(C)|_{Z_0}\simeq\calO_{Z_0}$ and its degree on the
generic fiber is $q$, the Picard formula \eqref{eq:picard} gives
$\calO_U(C)\simeq\xi^q$.
\end{proof}

\subsection{The Determinant Formula and the Degree Inequality}
\begin{prop}\label{prop:pointed}
Let $Z$ be a normal projective variety of dimension $k-1$, and let
$H$ be a globally generated line bundle on $Z$ with $\int_Zc_1(H)^{k-1}>0$. Put
\[
 p\colon U=\PP_Z(\calO_Z\oplus H^{-1})\longrightarrow Z,
 \qquad \xi=\calO_U(1),
\]
and let $Z_0$ be the section defined by $\calO_Z$. Suppose
$\pi\colon W\to U$ is a projective birational morphism from a normal variety $W$,
and is an isomorphism over a neighborhood of all of $Z_0$.
Identify $Z_0$ with its inverse image in $W$. Let $B$ be a
globally generated vector bundle of rank $1\le r\le k$ on $W$, with
\[
 (\det B)|_{Z_0}\simeq\calO_{Z_0},\qquad
 B\twoheadrightarrow\pi^*\xi^a,\quad a>0.
\]
The morphism $\pi$ is an isomorphism over a general fiber of the
projective-line bundle. If $b$ is the degree of $\det B$ on its
inverse image, then $b\ge a$.
\end{prop}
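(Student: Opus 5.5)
The plan is to reduce the statement to Proposition~\ref{prop:numerical}, applied on $W$ with $L=\pi^*\xi$. First, $\xi$ is globally generated because $H$ is, so $L$ is globally generated. By the projective-bundle formulas, $\int_W c_1(L)^k=\int_U\tau^k=\int_Zc_1(H)^{k-1}>0$, using projection along the birational $\pi$. The rank hypothesis $1\le r\le k=\dim W$ is given. What remains is to write $\det B\simeq L^{\otimes b}(-D)$ for an effective Cartier divisor $D$ on $W$. Once this is done, the final clause of Proposition~\ref{prop:numerical}, with the surjection $B\twoheadrightarrow L^{\otimes a}$, gives $b\ge a$.

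\textbf{Step 1 (the fiber claim).} The assertion that $\pi$ is an isomorphism over a general fiber works as in Lemma~\ref{lem:graph}. The exceptional locus of $\pi$ in the normal variety $U$ has codimension at least two by Lemma~\ref{lem:extension}, so its image in $Z$ is a proper closed subset. Over its complement $Z^\circ$, the degree $b$ is well defined and constant.

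\textbf{Step 2 (an effective divisor from global generation).} Choose $r$ general sections of $B$. Their wedge is a section of $\det B$, and I will take its zero divisor $E$ on $W$; it is effective Cartier. Global generation lets me choose the sections independent at a chosen point of $Z_0$. Since $(\det B)|_{Z_0}$ is trivial, the restricted section is a constant that is nonzero at that point, hence nowhere zero along $Z_0$. So $E$ is disjoint from $Z_0$. This assumes $Z_0$ is connected and proper, which holds because $Z_0\simeq Z$ is a variety.

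\textbf{Step 3 (comparison).} Push $E$ forward to get the effective Weil divisor $C=\pi_*E$ on $U$. Since $\pi$ is an isomorphism near $Z_0$, $C$ misses $Z_0$. Lemma~\ref{lem:vertex-divisor} then makes $C$ Cartier with $\calO_U(C)\simeq\xi^q$. Here $q$ is its degree on the generic fiber, and $q=b$ because $\pi$ is an isomorphism over general fibers. Now compare $\pi^*C$ with $E$. Both are Cartier divisors on the normal $W$ and agree away from the $\pi$-exceptional divisors, which lie over the indeterminacy-type locus of codimension at least two in $U$. Every such exceptional prime divisor appears in $\pi^*C$ with multiplicity at least that in $E$, since $\pi^*\pi_*E-E$ is supported on exceptional divisors. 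So I set $D=\pi^*C-E$.

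\textbf{Main obstacle.} The hard step is to prove that $D$ is effective and Cartier. $D$ is Cartier as a difference of Cartier divisors, but effectivity is not automatic: $\pi^*\pi_*E-E$ can have negative coefficients on exceptional divisors. I would first try a negativity-lemma style argument, but that needs $-D$ to be $\pi$-nef, which is unavailable in general. The fallback is to choose the sections generically, so that $E$ contains no exceptional prime divisor. Then $E$ is the strict transform, and $\pi^*C\ge E$ coefficientwise because pulling back an effective Cartier divisor gives multiplicity at least zero on each exceptional component. Generic choice suffices for this: the base locus of the wedges is empty by global generation of $\det B$, which is a quotient of $\wedge^rH^0$. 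This gives $\det B\simeq\pi^*\xi^b(-D)$ with $D\ge0$, and Proposition~\ref{prop:numerical} finishes the proof.
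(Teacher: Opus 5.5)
Your proposal is correct and follows the paper's proof essentially step for step. You pick a general section of $\det B$ (as a wedge of $r$ general sections of $B$, which is equivalent since these generate $\det B$) whose zero divisor $E$ misses $Z_0$ and contains no $\pi$-exceptional prime. You then use Lemma~\ref{lem:vertex-divisor} to get $\calO_U(\pi_*E)\simeq\xi^b$, set $D=\pi^*\pi_*E-E\ge0$, and apply Proposition~\ref{prop:numerical} with $L=\pi^*\xi$. Two points to tighten: drop the preliminary claim in Step~3 that $\pi^*\pi_*E-E$ is automatically effective, keeping only the generic-choice argument; and say explicitly that nonnegative Weil coefficients make the Cartier divisor $D$ effective because $W$ is normal, as the paper does.
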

\begin{proof}
The normal variety $U$ is regular at its codimension-one points.
Lemma~\ref{lem:extension} shows that $\pi$ is an isomorphism over
a neighborhood of every such point. The complement $J$ of its
isomorphism locus therefore has dimension at most $k-2$.
Since $p$ is proper, $p(J)$ is a proper closed subset of $Z$.
Thus each fiber over $Z\setminus p(J)$ is isomorphic to its full
inverse image. The degree of the determinant on these fibers is
constant by the projective-bundle Picard formula \eqref{eq:picard}.
For $k=1$, $J$ is empty.

The projective-bundle formula gives, with $\tau=c_1(\xi)$,
\begin{equation}\label{eq:ruled-identities}
 \xi\text{ globally generated},\qquad
 \int_U\tau^k=\int_Zc_1(H)^{k-1}>0,\qquad
 \Pic(U)=p^*\Pic(Z)\oplus\ZZ[\xi].
\end{equation}
These identities do not require smoothness.
Choose a section of $\det B$ whose zero divisor $A$ misses $Z_0$
and contains no $\pi$-exceptional prime divisor. Global generation
excludes the finitely many prohibited linear conditions. The
exceptional primes are codimension-one components of a fixed proper
closed subset, hence finite in number. Each condition of vanishing
identically on one of them defines a proper linear subspace.
The entire section is avoided because a nonzero restriction to
its trivial determinant is constant on the projective variety $Z_0$.

The effective Weil divisor $C=\pi_*A$ misses $Z_0$.
Lemma~\ref{lem:vertex-divisor} shows that $C$ is Cartier and
$\calO_U(C)=\xi^q$, where $q$ is its degree on the generic fiber.
On a general fiber, $C$ and $A$ agree under the isomorphism induced by
$\pi$, so $q=b$.

Set $D=\pi^*C-A$. At nonexceptional prime divisors its coefficients
are zero. At exceptional prime divisors the coefficient of $A$
is zero and that of $\pi^*C$ is nonnegative. Since $W$ is normal,
the nonnegative Weil coefficients of the Cartier divisor $D$
imply that $D$ is effective
\cite[\href{https://stacks.math.columbia.edu/tag/031T}{Tag 031T}]{Stacks}.
Hence
\[
 \det B=\pi^*\xi^b(-D),\qquad D\text{ an effective Cartier divisor}.
\]
Apply Proposition~\ref{prop:numerical} to $L=\pi^*\xi$.
The projection formula and \eqref{eq:ruled-identities} give its
positive top self-intersection. For $r=1$, the quotient itself is an
isomorphism, so $b=a$.
\end{proof}

\begin{rmk}\label{rem:vertex}
Take $Z=\PP^1$, $H=\calO_Z(1)$ and
$U=\PP_Z(\calO_Z\oplus H^{-1})$. The globally generated rank-two
bundle $B=p^*(\calO_Z\oplus H)$ has its tautological quotient
$B\twoheadrightarrow\xi$. The degree of its determinant on a fiber is
$b=0$, although the quotient exponent is $a=1$ and $\dim U=2$.
Here $(\det B)|_{Z_0}=H$ is nontrivial, so the determinant
triviality assumption in Proposition~\ref{prop:pointed} is necessary.
\end{rmk}

\section{Dimension Bounds, Freeness, and Irreducibility}\label{sec:proof}
We apply the two determinant estimates to a family through a smooth
point of $Y$, then deduce the dimension bounds, freeness, and
irreducibility for $F_1(X)$.

\begin{proof}[Proof of Theorem~\ref{thm:swept}]
If $m\le n-3$, the assertion is immediate. Assume $m\ge n-2$
and set $k=t+1=m+2-y\ge r$.
The map $e|_\Omega\colon \Omega\to Y_{\sm}$ in Proposition~\ref{prop:line} is
smooth of relative dimension $t$. Choose a closed point $x$ in
its nonempty open image. Take a component of $\Omega_x$ and let
$T$ be its reduced closure in the proper fiber $e^{-1}(x)$.
Then $T$ is a projective variety of dimension $t$, and its general
point parametrizes a member of $S^\circ$.

If $L_x\subset V$ represents $x$, the scheme of lines through
$x$ is $\PP(V/L_x)\subset\Gr(2,n+1)$, and $e^{-1}(x)$ is its fiber
product with $S$. Thus $T$ is a closed subvariety of this direction
space. Let $\rho\colon Z=T^\nu\to T$ be its normalization, a finite
birational morphism, and put $H=\rho^*\calO_T(1)$.
The variety $Z$ is normal and projective, but need not be smooth.
Then
\[
 \dim Z=t=k-1,\qquad H\text{ globally generated},\qquad
 \int_Zc_1(H)^{k-1}=\deg T>0.
\]
For $t=0$ these assertions mean $Z=T$ is a point and its degree is one.

A splitting $V=L_x\oplus V'$ identifies the tautological line
bundle with $H^{-1}\subset V'\otimes\calO_Z$. Hence
$U=\PP_Z(\calO_Z\oplus H^{-1})$ has a morphism $e_U\colon U\to Y$
with $e_U^*\calO_Y(1)=\xi$. Each fiber maps isomorphically
to the line it parametrizes, and $Z_0$ maps to $x$.

Normalize the reduced graph closure of the rational lift
$U\dashrightarrow\widehat Y$ to obtain a normal projective variety
$W$ with morphisms
\[
 \pi\colon W\longrightarrow U,\qquad g\colon W\longrightarrow\widehat Y,
 \qquad \nu g=e_U\pi.
\]
By Lemma~\ref{lem:graph}, $\pi$ is an isomorphism over a
neighborhood of the whole section $Z_0$ and over entire general
fibers of the projective-line bundle. Since $x\in Y_{\sm}$,
$g|_{Z_0}$ is constant.

Now $B=g^*\calB_Y$ is globally generated of rank $r\le k=\dim W$,
is trivial on $Z_0$, and \eqref{eq:gradient} pulls back to
\[
 B\twoheadrightarrow g^*\nu^*\calO_Y(d-1)
 =(e_U\pi)^*\calO_Y(d-1)=\pi^*\xi^{d-1}.
\]
Proposition~\ref{prop:pointed} gives $b\ge d-1$ on the inverse
image of a general fiber. Choose this fiber also over the open
set of $Z$ parametrizing $S^\circ$. Since $\pi$ is an isomorphism
over the whole fiber, the restriction of $g$ defines a lift of $\ell$.
It agrees with $\lambda_\ell$ over $\ell\cap Y_{\sm}$, hence on
the whole line by uniqueness. The two constructions therefore give
the same determinant degree, and Proposition~\ref{prop:line} yields
\begin{equation}\label{eq:sandwich}
 d-1\le b=\deg\lambda_\ell^*\calB_Y\le2y-m-2.
\end{equation}
Both choices impose dense open conditions on the integral base $Z$,
so the same fiber satisfies them.
Hence $m\le2y-d-1$, proving \eqref{eq:swept}.
Since $y\le n-1$, we also obtain
\begin{equation}\label{eq:uniform}
 m\le\max\{2n-d-3,\,n-3\}.
\end{equation}
\end{proof}

\begin{rmk}\label{rem:smooth-swept}
If $Y$ is smooth, the lower bound can be proved on the original
$U_T=\PP_T(\calO_T\oplus\calO_T(-1))$, even when $T$ is nonnormal.
The bundle $B=e_T^*N_{Y/\PP^n}(-1)$ is globally generated,
admits the gradient map $B\twoheadrightarrow\xi^{d-1}$ and is
constant on the section $T_0$ contracted to the fixed point.
Formula~\eqref{eq:picard} therefore gives the exact identity
$\det B=\xi^b$. The projective-bundle formula gives
$\int_{U_T}c_1(\xi)^k=\deg T>0$, so
Proposition~\ref{prop:numerical} applies with $D=0$.
On a general line, $M_\ell=N_{\ell/Y}$ and the upper bound is
the usual normal-bundle deformation calculation.
The same determinant calculation applies whenever the rational lift
extends to $U_T$.
For singular $Y$, the normalizations above extend this lift and
yield an effective divisor in the determinant formula.
\end{rmk}

\begin{proof}[Proof of Theorem~\ref{thm:main}]
By Barth--Van de Ven \cite[Theorem~8(i)]{BVV}, every degree-$d$
hypersurface in $\PP^n$ contains a line if $d\le2n-3$.
For $d\le n$, each component has dimension at least
$\delta=2n-d-3$ because it is cut out by $d+1$ local equations in
$\Gr(2,n+1)$. Give its support the reduced structure and apply
\eqref{eq:uniform}. Since $\delta\ge n-3$, its dimension is at
most $\delta$. Equality follows.

For $d\ge n+1$, \eqref{eq:uniform} gives $m\le n-3$
for every component. The classical Fermat example recalled in
\S\ref{subsec:fermat} shows that this bound is sharp.
\end{proof}

Theorem~\ref{thm:swept} also proves Debarre's covering conjecture
\cite[Conjecture~2]{Debarre03}.
\begin{cor}\label{cor:debarre-covering}
Let $X\subset\PP^n_\CC$ be a smooth hypersurface of degree
$d\ge2$, with $n\ge3$. If $d\ge n-2$ and
$S\subset F_1(X)$ is a closed subvariety of dimension at least
$n-2$, then the lines parametrized by $S$ cover $X$.
\end{cor}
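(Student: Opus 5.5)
We apply Theorem~\ref{thm:swept} to $S$ and show directly that the swept variety $Y$ must be all of $X$. Put $m=\dim S\ge n-2$. Since $m>n-3$, the maximum in \eqref{eq:swept} has to be $2y-d-1$, which gives
\[
 n-2\le m\le 2y-d-1 .
\]
Rearranging, $2y\ge n+d-1$.

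Now assume, for contradiction, that the lines do not cover $X$. Because $Y\subset X$ is closed and integral and $X$ is irreducible, this means $y\le n-2$. Substituting, $2(n-2)\ge n+d-1$, that is, $d\le n-3$. This contradicts the hypothesis $d\ge n-2$. Hence $y=n-1$.

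Since $Y$ is a closed subvariety of the irreducible $(n-1)$-dimensional hypersurface $X$ and has the same dimension, $Y=X$. Irreducibility of $X$ holds because $X$ is a smooth hypersurface with $n\ge 3$: it is connected, hence integral. Therefore the lines parametrized by $S$ cover $X$.

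There is no real obstacle here. The only points to check are that Theorem~\ref{thm:swept} applies to any closed subvariety $S$ of $F_1(X)$, with no hypothesis on $d$ versus $n$, and that the case $m\ge n-2$ forces the first branch of the maximum.
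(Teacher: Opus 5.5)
Your proposal is correct and is essentially the paper's proof: both assume $Y\ne X$ (so $y\le n-2$), apply Theorem~\ref{thm:swept}, and use $d\ge n-2$ to get $2y-d-1\le n-3$, which contradicts $m\ge n-2$. The only difference is bookkeeping: the paper bounds the whole maximum by $n-3$ directly, while you first discard the $n-3$ branch and then rearrange.
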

\begin{proof}
Let $Y$ be the reduced union of the lines parametrized by $S$,
and put $m=\dim S$ and $y=\dim Y$. If $Y\ne X$, then $y\le n-2$.
Theorem~\ref{thm:swept} and $d\ge n-2$ give
\[
 m\le\max\{2y-d-1,n-3\}\le n-3,
\]
contrary to $m\ge n-2$.
\end{proof}

\subsection{Freeness and Irreducibility}\label{subsec:free-irreducible}
For a line $\ell\subset X$, smoothness of $X$ gives the normal sequence
\begin{equation}\label{eq:normal}
 0\longrightarrow N_{\ell/X}\longrightarrow
 \calO_\ell(1)^{n-1}\longrightarrow\calO_\ell(d)\longrightarrow0.
\end{equation}
Every summand of $N_{\ell/X}$ has degree at most one, so
$\ell$ is free if and only if $H^1(N_{\ell/X}(-1))=0$.

\begin{proof}[Proof of Corollary~\ref{cor:free-irreducible}]
Assume $d\le n-1$. For each reduced component $S\subset F_1(X)$,
Theorem~\ref{thm:main} gives $m=2n-d-3\ge n-2$, and
Theorem~\ref{thm:swept} gives $m\le2y-d-1$. Hence $y=n-1$
and every component sweeps out $X$.

For any integral covering family on $X$, generic smoothness of
evaluation shows that sections arising from infinitesimal
deformations generically generate $N_{\ell/X}$ on a general line.
Its splitting on $\PP^1$ therefore has no negative summand, so it is globally generated
\cite[Theorem~2.1]{Grothendieck57}. This proves \textup{(i)}
by the standard deformation argument for covering families
\cite[Corollary~II.3.10.1]{Kollar}, also recalled in
\cite[Proposition~2.2]{BR}.

The nonfree locus is closed by upper semicontinuity of
$h^1(N_{\ell/X}(-1))$ in the universal normal sequence
\eqref{eq:normal}. No component of this locus sweeps out $X$,
by the preceding argument applied to its reduced structure.
Its finitely many evaluation images are proper closed subsets,
since the universal line over each component is projective.
Removing their union gives a dense open $X^\circ\subset X$
through which every line is free.

Now assume $d\le n-2$ and choose $x\in X^\circ$.
At every line $\ell$ through $x$, freeness gives
$H^1(N_{\ell/X})=H^1(N_{\ell/X}(-x))=0$.
The first vanishing and \eqref{eq:normal} show that $F_1(X)$ is
smooth at $[\ell]$. Global sections of the normal bundle, together
with the tangent direction of the line, surject onto $T_xX$,
so the evaluation map to $X$ is smooth at $(\ell,x)$.
Since $N_{\ell/X}$ has rank $n-2$ and degree $n-d-1$, the tangent
space to the fiber, $H^0(N_{\ell/X}(-x))$, has dimension
$n-d-1$. Thus the nonempty fiber $F_1(X,x)$ is smooth of
positive dimension $n-d-1$.

In coordinates $x=[1:0:\cdots:0]$, write
$F=\sum_{i=1}^d z_0^{d-i}f_i(z_1,\ldots,z_n)$, with $\deg f_i=i$.
The equations for the lines through this point are
\[
 F_1(X,x)=V(f_1,\ldots,f_d)\subset\PP^{n-1}.
\]
The dimension just computed gives codimension $d$, so this is a
positive-dimensional complete intersection. Its Koszul resolution
shows that $H^0(\calO_{F_1(X,x)})=\CC$. Indeed, writing $N=n-1$, the
negative line-bundle terms have no cohomology in degrees
$0,\ldots,N-1$, and their degree-$N$ cohomology contributes only
to total degree at least $N-d\ge1$. Hence the fiber is connected
and, being smooth, irreducible. This is the connectedness argument
used in \cite[Corollary~2.7]{BR}.

Every component of $F_1(X)$ sweeps out $X$, so properness of
evaluation makes its contribution to $F_1(X,x)$ nonempty and
closed. Distinct components contribute disjoint subsets, since
$F_1(X)$ is smooth at every line through $x$. Connectedness of
the fiber therefore forces a single component, proving \textup{(ii)}.
\end{proof}
A smooth quadric surface has two rulings, so the irreducibility
range is sharp as a uniform statement.

\subsection{Classical Sharpness Examples}
\label{subsec:fermat}
We recall the standard Fermat example, recorded in
\cite[p.~1]{Debarre03} and described explicitly in \cite[\S2]{B06}.
On $X_d=V(z_0^d+\cdots+z_n^d)$, take
$p=[1:\zeta:0:\cdots:0]$, with $\zeta^d=-1$.
The lines joining $p$ to
\[
 Q_d=V(z_2^d+\cdots+z_n^d)\subset\PP^{n-2}
\]
lie on $X_d$. For $n\ge4$, they form a family isomorphic to $Q_d$
of dimension $n-3$, sweeping a cone of dimension $n-2$.
For $n=3$, any point of $Q_d$ supplies a line. This is the classical
example attaining the bound $n-3$ for $d\ge n$. See also
\cite[\S1]{B06} and \cite[\S3.1]{BR}.

The classical incidence count also shows that a general hypersurface
contains no line when $d\ge2n-2$. Indeed, the universal incidence
variety of \cite[Theorem~3.3]{AK} has dimension
$\dim H^0(\calO(d))+2n-d-3$, smaller than the space of forms,
and its projection is proper.

We record how the Fermat example relates to our degree inequality.
In our notation it has $m=n-3$, $y=n-2$, $r=2$, and $t=0$.
At $d=n$, one has $m=n-3>n-5=2y-d-1$, so the term $n-3$
in \eqref{eq:swept} cannot be omitted. The subspace defining the
Gauss map is constant along a ruling, giving $\lambda_\ell^*\calB_Y=\calO_\ell^2$
and $b_\ell=0$, while the gradient map is
\[
 (s^{d-1},t^{d-1})\colon \calO_\ell^2\twoheadrightarrow\calO_\ell(d-1).
\]
Over a general smooth point of the swept variety, the construction
gives a projective bundle of dimension $k=1<r=2$, outside the
hypotheses of Proposition~\ref{prop:pointed}.
For $n\ge4$, the cone vertex is singular on the swept variety,
so it cannot replace the smooth point chosen in that proposition's
application.

\section{Comparison with Earlier Approaches}\label{sec:comparison}
We compare the degree of the tautological quotient with
Beheshti--Riedl's normal-bundle invariant and apply the determinant
inequality to families of nonfree lines.

\subsection{Normal-Bundle Invariants}
For a general line $\ell$ of a family, put
\[
 \delta_{\mathrm{BR}}=n-1-h^0(N_{\ell/X}(-1)).
\]
Beheshti--Riedl expand the defining form at a point to obtain
equations of degrees $1,\ldots,d$ for the lines through it, then
apply a tangency-locus dimension bound
\cite[Lemmas~2.1 and 2.3]{BR}. Their Theorem~2.4 gives
\begin{equation}\label{eq:BR-dichotomy}
 \begin{split}
 &\delta_{\mathrm{BR}}=d,\quad N_{\ell/X}\text{ globally generated},
 \qquad\text{or}\\
 &t\le\delta_{\mathrm{BR}}-1,\qquad
 m\le y+\delta_{\mathrm{BR}}-2.
 \end{split}
\end{equation}
For a hypothetical component of dimension greater than expected, their proof of
\cite[Corollary~2.5]{BR} uses \cite[Theorem~3.2(b)]{B14} to obtain
$y\le n-3$. The case $\delta_{\mathrm{BR}}=d-1$ is unobstructed,
so the remaining case satisfies
\[
 m\le(n-3)+(d-2)-2=n+d-7\le2n-d-3
 \quad\Longleftrightarrow\quad n\ge2d-4.
\]
They prove irreducibility for $n\ge2d-1$ and $n\ge4$
\cite[Corollary~2.8]{BR}. We compare their invariant with the
degree of the tautological quotient through the following exact sequence.

\begin{prop}\label{prop:bridge}
For a general line as in Proposition~\ref{prop:line}, write
$b=\deg\calB_\ell$ and
$K_\ell=\ker(\calB_\ell\twoheadrightarrow\calO_\ell(d-1))$.
Then $K_\ell$ is locally free of rank $r-1$ and degree $b-d+1$,
and
\begin{equation}\label{eq:bridge-sequence}
 0\longrightarrow M_\ell(-1)\longrightarrow
 N_{\ell/X}(-1)\longrightarrow K_\ell\longrightarrow0.
\end{equation}
Moreover
\begin{equation}\label{eq:bridge}
 \delta_{\mathrm{BR}}=r+b-h^0(K_\ell)=d-h^1(K_\ell).
\end{equation}
In particular, $\delta_{\mathrm{BR}}=d$ if and only if $\ell$ is free.
\end{prop}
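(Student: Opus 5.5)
We will prove Proposition~\ref{prop:bridge} by comparing the sequence \eqref{eq:line-sequence} with the normal sequence \eqref{eq:normal}, and then reading off \eqref{eq:bridge} from Euler characteristics on $\ell\simeq\PP^1$.

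\emph{The kernel $K_\ell$.} Pull \eqref{eq:gradient} back along $\lambda_\ell$. This gives a surjection of vector bundles $\calB_\ell\twoheadrightarrow\calO_\ell(d-1)$, since $\nu\lambda_\ell$ is the inclusion of $\ell$ and $\calO_Y(1)$ restricts to $\calO_\ell(1)$. A surjection of locally free sheaves on a curve has locally free kernel. Hence $K_\ell$ is a bundle of rank $r-1$ and degree $b-(d-1)$.

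\emph{The exact sequence \eqref{eq:bridge-sequence}.} Twist \eqref{eq:line-sequence} by $\calO_\ell(-1)$ to get
\[
0\to M_\ell(-1)\to (V/A)\otimes\calO_\ell\to\calB_\ell\to0 .
\]
The normal sequence \eqref{eq:normal} twisted by $-1$ reads
\[
0\to N_{\ell/X}(-1)\to (V/A)\otimes\calO_\ell\to\calO_\ell(d-1)\to0 ,
\]
where the second map is the gradient of $F$ restricted to $\ell$. This map kills $A$ by Euler's identity together with $\ell\subset X$. The gradient on $V\otimes\calO_\ell$ factors through $V\otimes\calO_\ell\to\calB_\ell\to\calO_\ell(d-1)$ by construction of \eqref{eq:gradient}. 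So the composite $(V/A)\otimes\calO_\ell\to\calB_\ell\to\calO_\ell(d-1)$ is exactly the normal-sequence map.

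Taking kernels of the two surjections onto $\calO_\ell(d-1)$ and applying the snake lemma to the surjection $(V/A)\otimes\calO_\ell\to\calB_\ell$ gives a surjection $N_{\ell/X}(-1)\to K_\ell$. Its kernel equals the kernel of $(V/A)\otimes\calO_\ell\to\calB_\ell$, namely $M_\ell(-1)$. This is \eqref{eq:bridge-sequence}.

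\emph{The formula \eqref{eq:bridge}.} By \eqref{eq:M-splitting}, $M_\ell(-1)=\calO_\ell^{y-1-b}\oplus\calO_\ell(-1)^{b}$. This bundle has $h^1=0$ and $h^0=y-1-b$. The long exact sequence of \eqref{eq:bridge-sequence} then gives
\[
h^0(N_{\ell/X}(-1))=y-1-b+h^0(K_\ell).
\]
Substituting into the definition, and using $n-y=r$,
\[
\delta_{\mathrm{BR}}=n-1-h^0(N_{\ell/X}(-1))=r+b-h^0(K_\ell).
\]
Riemann--Roch for $K_\ell$ gives $h^0(K_\ell)-h^1(K_\ell)=\deg K_\ell+\rank K_\ell=b-d+r$. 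Substituting, the right side becomes $d-h^1(K_\ell)$, which is the second equality in \eqref{eq:bridge}.

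\emph{The freeness criterion.} The same long exact sequence gives $H^1(N_{\ell/X}(-1))\simeq H^1(K_\ell)$, again because $H^1(M_\ell(-1))=0$. By the remark after \eqref{eq:normal}, $\ell$ is free if and only if $H^1(N_{\ell/X}(-1))=0$. This holds if and only if $h^1(K_\ell)=0$, that is, $\delta_{\mathrm{BR}}=d$.

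\emph{Main obstacle.} The only delicate point is checking that the gradient surjection in \eqref{eq:normal} really is the composite through $\calB_\ell$ on the whole line, including over singular points of $Y$. This holds because both maps are restrictions of the single map $V\otimes\calO\to\calO(d-1)$, and \eqref{eq:gradient} was obtained from it everywhere on $\widehat Y$.
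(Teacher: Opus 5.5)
Your proposal is correct and takes the same approach as the paper. You factor the twisted normal-sequence map through $\calB_\ell$ via the pulled-back gradient \eqref{eq:gradient}, take kernels to get \eqref{eq:bridge-sequence}, and conclude from $H^1(M_\ell(-1))=0$, $h^0(M_\ell(-1))=y-1-b$ and Riemann--Roch for $K_\ell$; your explicit snake-lemma step and your check that the factorization holds on all of $\ell$ only spell out what the paper states in one line.
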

\begin{proof}
The map $\calO_\ell^{n-1}\to\calO_\ell(d-1)$ in the twist of
\eqref{eq:normal} factors through $\calB_\ell$ by \eqref{eq:gradient}.
Taking kernels gives \eqref{eq:bridge-sequence}, so
$M_\ell\subset N_{\ell/X}$ is a subbundle with quotient $K_\ell(1)$.
Since $M_\ell(-1)=\calO_\ell^{y-1-b}\oplus\calO_\ell(-1)^b$,
its first cohomology vanishes and its space of sections has dimension
$y-1-b$. The cohomology sequence therefore gives
\[
 h^0(N_{\ell/X}(-1))=y-1-b+h^0(K_\ell).
\]
This proves the first equality in \eqref{eq:bridge}. Riemann--Roch
on $\PP^1$ gives $\chi(K_\ell)=r+b-d$, proving the second.
The same cohomology sequence identifies
$H^1(K_\ell)=H^1(N_{\ell/X}(-1))$, which vanishes exactly for a
free line, by the splitting criterion following \eqref{eq:normal}.
\end{proof}

The degree $b$ depends on the limiting tangent spaces of $Y$,
whereas $\delta_{\mathrm{BR}}$ depends on $\ell\subset X$.
For example, $Y=X$ gives $b=d-1$ and $\delta_{\mathrm{BR}}=d$.
Our determinant inequality gives
\[
 k-r=m+2-n\ge0
 \quad\Longrightarrow\quad
 d-1\le b\le2y-m-2
 \quad\Longrightarrow\quad t\le y-d.
\]
The condition $m\ge n-2$ holds for every component when $d<n$,
and for any component exceeding the asserted bound when $d\ge n$.
Writing $c=\codim_XY$, we obtain
\[
 m\le\max\{(2n-d-3)-2c,\,n-3\}.
\]
For $d\le n-1$, this forces every component of $F_1(X)$ to dominate
$X$, as used in the proof of Corollary~\ref{cor:free-irreducible}.
For a nonfree family with $m\ge n-2$, the two bounds are
$m\le y+\delta_{\mathrm{BR}}-2$ and $m\le2y-d-1$.
The second is strictly smaller precisely when
$y<d+\delta_{\mathrm{BR}}-1$.

\subsection{Families of Nonfree Lines}
The standard deformation argument for covering families
\cite[Corollary~II.3.10.1]{Kollar}, recalled in
\cite[Proposition~2.2]{BR}, shows that a family with nonfree general
member sweeps a proper subvariety of $X$. Under $m\ge n-2$, we obtain
an additional codimension.

\begin{prop}\label{prop:nonfree-codimension}
Let $S\subset F_1(X)$ be a projective subvariety with
$m\ge n-2$. Suppose the normal bundle of a general member has $u\ge1$
negative summands in its splitting on $\PP^1$. Then
\[
 \codim_XY\ge u+1\ge2.
\]
In particular a family of this dimension sweeping either $X$ or a
divisor in $X$ has a free general member.
\end{prop}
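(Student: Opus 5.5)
The plan is to compare the splitting of $N_{\ell/X}$ with the exact sequence \eqref{eq:bridge-sequence} of Proposition~\ref{prop:bridge}, and then use the lower bound $b\ge d-1$ coming from the determinant inequality. Since $\codim_XY=r-1$ with $r=n-y$, the claim is equivalent to $u\le r-2$. Take $\ell$ general, as in Proposition~\ref{prop:line}. Twisting \eqref{eq:bridge-sequence} by $\calO_\ell(1)$ gives
\[
0\to M_\ell\to N_{\ell/X}\to K_\ell(1)\to0 .
\]
Here $M_\ell$ has only summands of degree $0$ and $1$ by \eqref{eq:M-splitting}, and $K_\ell(1)$ has rank $r-1$ and degree $b-d+1+(r-1)$.

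\emph{Step 1: $u\le r-1$.} Dualize to get $0\to K_\ell(1)^\vee\to N_{\ell/X}^\vee\to M_\ell^\vee\to0$. Let $P\subset N_{\ell/X}^\vee$ be the sum of the $u$ summands of positive degree, which are the duals of the negative summands of $N_{\ell/X}$. Since $M_\ell^\vee$ has only summands of degree $\le0$, we have $\mathrm{Hom}(\calO_\ell(c),M_\ell^\vee)=0$ for $c>0$. Hence $P$ maps to zero in $M_\ell^\vee$ and so lies in $K_\ell(1)^\vee$. This gives an injection of sheaves $P\hookrightarrow K_\ell(1)^\vee$, so $u\le r-1$. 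In particular $u\ge1$ forces $r\ge2$, that is, $Y\ne X$.

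\emph{Step 2: excluding $u=r-1$.} Suppose $u=r-1$. Then $P\hookrightarrow K_\ell(1)^\vee$ is an injection of vector bundles of the same rank on $\PP^1$. Its cokernel is torsion, so
\[
\deg K_\ell(1)^\vee\ge\deg P\ge u=r-1 .
\]
Since $\deg K_\ell(1)^\vee=-(b-d+1)-(r-1)$, this yields $b\le d-1-2(r-1)<d-1$, because $r\ge2$.

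On the other hand, the hypothesis $m\ge n-2$ is exactly the setting of the proof of Theorem~\ref{thm:swept}. There, Proposition~\ref{prop:pointed} applied on the normalized graph closure over a general smooth point gives $d-1\le b$, via \eqref{eq:sandwich}, for the general line. This is a contradiction, so $u\le r-2$, which is $\codim_XY\ge u+1\ge2$.

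The final sentence of the proposition is then immediate. If $Y=X$ or $Y$ is a divisor in $X$, then $\codim_XY\le1$, which forces $u=0$. So $N_{\ell/X}$ has no negative summand, and the general member is free.

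The main point needing care is in Step 2: the line used in \eqref{eq:sandwich} must be a general member, so that $b$ and the splitting type of $N_{\ell/X}$ are both the generic ones. Both conditions are dense open in $S$ by semicontinuity, so they can be imposed simultaneously.
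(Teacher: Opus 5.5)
Your proof is correct and is essentially the paper's argument in dual form. The paper notes that the globally generated $M_\ell$ has no nonzero map to the negative part $N_-$ of $N_{\ell/X}$, so $K_\ell(1)=(N_+/M_\ell)\oplus N_-$, and it rules out $N_+/M_\ell=0$ because that would force $\deg K_\ell<0$, contradicting $b\ge d-1$ from \eqref{eq:sandwich}; your inclusion $N_-^\vee\hookrightarrow K_\ell(1)^\vee$ and the degree count in Step~2 are the dual of this, and your genericity remark amounts to the paper's step of rerunning the construction in the proof of Theorem~\ref{thm:swept} over an $S^\circ$ shrunk so that the splitting type is the generic one.
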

\begin{proof}
Shrink the open subset $S^\circ$ so that the splitting type of
$N_{\ell/X}$ is the general one. Repeat the construction in the
proof of Theorem~\ref{thm:swept} over this smaller open subset.
The chosen line has a normal bundle with exactly $u$ negative
summands and satisfies $b\ge d-1$ by \eqref{eq:sandwich}. Thus
$\deg K_\ell\ge0$. Write
$N_{\ell/X}=N_+\oplus N_-$, where $N_+$ is the sum of the nonnegative
summands and $N_-$ has rank $u$. The globally generated bundle $M_\ell$
has no nonzero map to $N_-$, so its inclusion factors through $N_+$.
Twisting \eqref{eq:bridge-sequence} gives
\[
 K_\ell(1)=(N_+/M_\ell)\oplus N_-.
\]
The quotient $R=N_+/M_\ell$ is locally free, since it is a direct
summand of the locally free left side. It is also globally generated.
If $R$ had rank zero, then $K_\ell=N_-(-1)$ would have negative degree,
a contradiction. Hence $r-1-u=\rank R\ge1$, and
$\codim_XY=r-1\ge u+1$.
\end{proof}

\begin{cor}\label{cor:nonfree}
Let $\NF(X)$ be the reduced closed locus of nonfree
lines on $X$. If it is nonempty, then
\begin{equation}\label{eq:NF-independent}
 \dim\NF(X)\le\max\{n-3,\,2n-d-7\}.
\end{equation}
If $2\le d\le n-1$, this gives
\begin{equation}\label{eq:NF-codimension}
 \codim_{F_1(X)}\NF(X)\ge\min\{4,n-d\}.
\end{equation}
Combining the same argument with \cite[Theorem~2.4]{BR} gives
\begin{equation}\label{eq:NF-hybrid}
 \dim\NF(X)
 \le\max\{n-3,\,\min(2n-d-7,\,n+d-6)\}.
\end{equation}
In particular, $\dim\NF(X)\le n+d-6$ when $d\ge3$.
\end{cor}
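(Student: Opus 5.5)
The plan is to apply Theorem~\ref{thm:swept} and Proposition~\ref{prop:nonfree-codimension} to each irreducible component $S$ of $\NF(X)$, with its reduced structure. Put $m=\dim S$ and $y=\dim Y$ for the swept variety.

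\textbf{Proof of \eqref{eq:NF-independent}.} If $m\le n-3$ there is nothing to prove, so assume $m\ge n-2$. A general member of $S$ is nonfree. By the splitting criterion after \eqref{eq:normal}, its normal bundle has $u\ge1$ negative summands; this uses that the splitting type of a general member of $S$ is constant on an open set. Proposition~\ref{prop:nonfree-codimension} then gives $\codim_XY\ge2$, so $y\le n-3$. Theorem~\ref{thm:swept} gives $m\le\max\{2y-d-1,n-3\}\le\max\{2n-d-7,n-3\}$.

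\textbf{Proof of \eqref{eq:NF-codimension}.} For $2\le d\le n-1$, Theorem~\ref{thm:main} gives $\dim F_1(X)=2n-d-3$, pure. Subtracting the previous bound gives
\[
\codim\ge\min\{(2n-d-3)-(n-3),\,4\}=\min\{n-d,4\}.
\]
This codimension is taken in each component of $F_1(X)$ containing $S$. That is legitimate because all components have the same dimension.

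\textbf{Proof of \eqref{eq:NF-hybrid}.} Keep $m\ge n-2$ and $y\le n-3$. By Proposition~\ref{prop:bridge}, a nonfree line has $\delta_{\mathrm{BR}}=d-h^1(K_\ell)\le d-1$. The dichotomy \eqref{eq:BR-dichotomy} then falls into its second case, so $m\le y+\delta_{\mathrm{BR}}-2\le (n-3)+(d-1)-2=n+d-6$. Together with $m\le 2y-d-1\le 2n-d-7$, this yields $m\le\min(2n-d-7,n+d-6)$ in this case, and $m\le n-3$ otherwise. For the final claim, note that $n+d-6\ge n-3$ exactly when $d\ge3$. Hence for $d\ge3$ the maximum is at most $n+d-6$.

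\textbf{Main obstacle.} The delicate point is checking that Proposition~\ref{prop:nonfree-codimension} and the Beheshti--Riedl theorem apply to the reduced component $S$ of $\NF(X)$ with the same general line. I would handle this by shrinking $S^\circ$ so that the splitting type is generic and the line is general in both senses simultaneously. This is possible because both conditions are dense open conditions on the integral $S$.
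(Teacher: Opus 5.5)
Your proposal is correct and follows the paper's proof essentially step for step. You split the components of $\NF(X)$ by whether $m\le n-3$, use Proposition~\ref{prop:nonfree-codimension} to get $y\le n-3$ and then Theorem~\ref{thm:swept}, subtract from the pure dimension $2n-d-3$ for the codimension bound, and combine $\delta_{\mathrm{BR}}\le d-1$ from \eqref{eq:bridge} with the second case of \eqref{eq:BR-dichotomy} for the hybrid bound; the genericity point you flag is handled in the paper by shrinking $S^\circ$ to the generic splitting type in the proof of Proposition~\ref{prop:nonfree-codimension}.
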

\begin{proof}
The nonfree locus is closed by the semicontinuity argument in
\S\ref{subsec:free-irreducible}.
Give each of its irreducible components $S$ the reduced structure.
If $m<n-2$, then $m\le n-3$. Otherwise
Proposition~\ref{prop:nonfree-codimension} gives $y\le n-3$, and
Theorem~\ref{thm:swept} gives $m\le2n-d-7$. This proves
\eqref{eq:NF-independent}. In the Fano range of the second assertion,
Theorem~\ref{thm:main} gives pure dimension $2n-d-3$. Subtracting
\eqref{eq:NF-independent} yields \eqref{eq:NF-codimension}.

For the last assertion, a nonfree line has
$\delta_{\mathrm{BR}}\le d-1$ by \eqref{eq:bridge}. If $m\ge n-2$,
\eqref{eq:BR-dichotomy} and $y\le n-3$ give
$m\le y+\delta_{\mathrm{BR}}-2\le n+d-6$. Combining this with
the previous estimate and the case $m\le n-3$ proves
\eqref{eq:NF-hybrid}. For $d\ge3$ one has $n-3\le n+d-6$.
\end{proof}

Proposition~2.2 and Theorem~2.4 of \cite{BR} give
$\dim\NF(X)\le n+d-5$.
This bound also appears in \cite[Proposition~5.2]{BLLRST},
under the standing assumptions $d\le n$ and $n\ge4$.
For $d\ge3$, \eqref{eq:NF-hybrid} improves it by at least one
dimension. The independent bound \eqref{eq:NF-independent}
is not uniformly smaller than $n+d-5$.

The classical Fermat family recalled in \S\ref{subsec:fermat}, with $d\ge3$,
has $m=n-3$ and $\codim_XY=1$. The entries of the map in its
normal sequence span the space generated by $s^{d-1}$
and $t^{d-1}$. The kernel of
$\calO_\ell(1)^2\twoheadrightarrow\calO_\ell(d)$ is
$\calO_\ell(2-d)$, so
\[
 N_{\ell/X}=\calO_\ell(1)^{n-3}\oplus\calO_\ell(2-d).
\]
Thus its members are nonfree. Both the dimension threshold and the
$n-3$ term in \eqref{eq:NF-independent} are necessary.

The dimension and nonfree-locus estimates also give the following
scheme-theoretic consequences.
\begin{cor}\label{cor:scheme-structure}
Let $X\subset\PP^n_\CC$ be a smooth hypersurface of degree
$2\le d\le n$, with $n\ge3$.
\begin{enumerate}[(i)]
\item The scheme $F_1(X)$ is a nonempty local complete intersection
of pure dimension $2n-d-3$. In particular, it is Cohen--Macaulay.
\item If $d\le n-1$, then $F_1(X)$ is generically smooth and reduced.
\item If $d\le n-2$, then $F_1(X)$ is integral and normal.
\end{enumerate}
\end{cor}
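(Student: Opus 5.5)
For \textup{(i)}, I will use the local description of $F_1(X)\subset\Gr(2,n+1)$ as the zero scheme of a section of $\Sym^d\calS^\vee$, a bundle of rank $d+1$. Locally $F_1(X)$ is then cut out by $d+1$ equations in a smooth variety of dimension $2(n-1)$. Theorem~\ref{thm:main}(i) says $F_1(X)$ is nonempty and every irreducible component has dimension exactly $2n-d-3=2(n-1)-(d+1)$. So the local equations form a regular sequence at every point: in a regular local ring, $d+1$ elements cutting out a quotient of the expected codimension form a regular sequence. Hence $F_1(X)$ is a local complete intersection of pure dimension $2n-d-3$, and in particular Cohen--Macaulay. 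I will apply this to the scheme structure itself, not its reduction, since the dimension statement only concerns supports.

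For \textup{(ii)}, I will show that $F_1(X)$ is smooth at a free line. For any line $\ell\subset X$ the tangent space is $H^0(N_{\ell/X})$. If $\ell$ is free, then $N_{\ell/X}$ splits with summands of degree $\ge0$, so $H^1(N_{\ell/X})=0$. Since $\deg N_{\ell/X}=n-d-1$ and the rank is $n-2$, we get $h^0=2n-d-3$. This equals the local dimension, so $F_1(X)$ is smooth at $[\ell]$. By Corollary~\ref{cor:free-irreducible}(i), when $d\le n-1$ a general line of each component is free. Thus every component contains a dense open subset of smooth points, which gives generic smoothness. A Cohen--Macaulay scheme that is generically reduced has no embedded points, so it is reduced (Serre's $R_0+S_1$).

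For \textup{(iii)}, I will check Serre's criterion $R_1+S_2$. Condition $S_2$ comes from the Cohen--Macaulay property in \textup{(i)}. For $R_1$, the singular locus is contained in $\NF(X)$, since free lines are smooth points by the computation above. For $d\le n-2$, Corollary~\ref{cor:nonfree} gives $\codim_{F_1(X)}\NF(X)\ge\min\{4,n-d\}\ge2$. Hence $F_1(X)$ is regular in codimension one, and therefore normal. A normal scheme is a disjoint union of integral normal schemes. Corollary~\ref{cor:free-irreducible}(ii) gives irreducibility, so $F_1(X)$ is integral.

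The main obstacle is the codimension count in \textup{(iii)}. At $d=n-2$ the bound $n-d=2$ is exactly at the threshold for $R_1$. The fact that the singular locus lies in $\NF(X)$ needs the $H^1$-vanishing criterion to apply scheme-theoretically at every free point, including points of a possibly nonreduced scheme. That is fine here, because the tangent space dimension equals the local dimension, which forces regularity.
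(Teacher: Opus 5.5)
Your proposal is correct and follows the paper's proof essentially step for step. The expected codimension of the zero scheme of $\Sym^d\calS^\vee$ gives a regular sequence. Free lines, present generically in each component by Corollary~\ref{cor:free-irreducible}, are smooth points, so $R_0+S_1$ gives reducedness. The codimension bound of Corollary~\ref{cor:nonfree} together with Cohen--Macaulayness gives $R_1+S_2$.

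The differences are cosmetic. You verify smoothness at a free line by matching $h^0(N_{\ell/X})=2n-d-3$ with the local dimension, where the paper uses surjectivity in \eqref{eq:normal}. You also deduce integrality from normality plus irreducibility, where the paper uses reducedness plus irreducibility.
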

\begin{proof}
The scheme $F_1(X)$ is the zero scheme of a section of the
rank-$(d+1)$ bundle $\Sym^d\calS^\vee$ on $\Gr(2,n+1)$.
By Theorem~\ref{thm:main}, it is nonempty and has pure
codimension $d+1$. Its local defining equations therefore form
a regular sequence in the regular ambient local rings
\cite[\href{https://stacks.math.columbia.edu/tag/02JN}{Tag 02JN}]{Stacks}.
Hence it is a local complete intersection and Cohen--Macaulay,
proving \textup{(i)}.

If $d\le n-1$, Corollary~\ref{cor:free-irreducible} gives a free
general line in every component. For such a line,
$H^1(N_{\ell/X})=0$, so \eqref{eq:normal} shows that $F_1(X)$
is smooth at $[\ell]$. Thus it is generically smooth and
satisfies $(R_0)$. Since it is Cohen--Macaulay, it also
satisfies $(S_1)$ and is therefore reduced
\cite[\href{https://stacks.math.columbia.edu/tag/031R}{Tag 031R}]{Stacks}.
This proves \textup{(ii)}.

If $d\le n-2$, Corollary~\ref{cor:free-irreducible} also gives
irreducibility, hence integrality. Every free line is a smooth
point of $F_1(X)$, and \eqref{eq:NF-codimension} shows that the
nonfree locus, if nonempty, has codimension at least two.
Thus $F_1(X)$ satisfies $(R_1)$ and, being Cohen--Macaulay,
$(S_2)$. Serre's criterion gives normality
\cite[\href{https://stacks.math.columbia.edu/tag/031S}{Tag 031S}]{Stacks},
proving \textup{(iii)}.
\end{proof}

\subsection{Tangency and Equations for Lines through a Point}
\cite[Lemma~2.1]{BR} bounds the tangency locus of a degree-$d$
hypersurface $V(h)$ with $\dim\Sing V(h)=s$ to $q$
lower-degree equations by $q+s$. The bound remains sharp even
where the lower-degree equations have independent differentials,
and even when all their degrees are prescribed. In $\PP^{2q-1}$ take
\[
 h=\sum_{i=1}^q(x_i^d+x_i y_i^{d-1}),\qquad
 h_i=x_i y_i^{e_i-1},\quad 1\le e_i<d.
\]
The hypersurface $V(h)$ is smooth because the two partial derivatives in
each pair $(x_i,y_i)$ vanish simultaneously only at $(0,0)$.
On the open subset $y_1\cdots y_q\ne0$ of
$L=V(x_1,\ldots,x_q)\simeq\PP^{q-1}$, the $dh_i$ are independent
multiples of $dx_i$, and $dh$ lies in their span. The tangency locus
therefore has dimension at least $q-1$, attaining $q+s$ for $s=-1$.
Adding $s+1$ unused coordinates gives a cone with singular locus
$\PP^s$ and a tangency locus of dimension $q+s$. Thus the original
hypotheses allow no uniformly smaller bound.

We recover the rank calculation of \cite[Lemma~2.3]{BR} from
the normal sequence. In coordinates
$\ell=V(z_2,\ldots,z_n)$, write
\[
 F=\sum_{j=2}^n z_j g_j(s,t)\pmod{\calI_\ell^2},
 \qquad \deg g_j=d-1.
\]
The normal sequence gives
$\delta_{\mathrm{BR}}=\dim\langle g_2,\ldots,g_n\rangle$.
The linear parts of the equations for lines through a general point
of $\ell$ are given by successive derivatives of these binary forms.
More explicitly,
put $q_j(t)=g_j(1,t)$ and take $p=(1,a)$. Then the $i$th row has entries
$q_j^{(i-1)}(a)/(i-1)!$. Their first
$\delta_{\mathrm{BR}}$ rows have that rank in characteristic zero because
a basis with distinct degrees on an affine chart has a nonzero
Wronskian, whose leading coefficient is a nonzero Vandermonde
factor in those degrees. All further rows lie in the same row span.
Hence the first $\delta_{\mathrm{BR}}$ equations have independent
linear parts, and every later linear part lies in their span.

For $n\ge d+2$, \cite[Corollary~2.7]{BR} gives a smooth irreducible
family of lines through a general point. Our irreducibility proof
uses the same connectedness argument together with dominance of
every component of $F_1(X)$.

\subsection{The Chern-Class Approach}
Landsberg--Tommasi \cite{LT} construct vector bundles using normal
forms for spaces of matrices of constant rank two arising from
tangent spaces to the Fano scheme. Their reduction detects singular
points through a product of top Chern classes. They also treat a
special case involving one bundle.

We use the tautological quotient bundle on the Nash modification.
The surjective gradient map gives $c_r(B^\vee\otimes L^{d-1})=0$,
while the determinant comparison and Lemma~\ref{lem:schur} give
\[
 \det B=L^b(-D),\qquad D\ge0,\qquad
 I_{i+1}\le bI_i,\qquad I_0>0.
\]
Pairing consecutive terms in the expansion of the top Chern class
forces $b\ge d-1$. The same bundle determines the degree on a complete
line and the determinant on the normalized graph closure.
Establishing nonvanishing of the product of top Chern classes,
and treating the singular hypersurfaces considered in \cite{LT},
require further arguments.

Beheshti's arguments \cite{B06,B14} use the geometry of families
sweeping divisors, with Hodge theory and monodromy in the later work.
We obtain dominance directly from Theorem~\ref{thm:swept}.
Related constructions include the logarithmic tangent bundles for
highly tangent lines in \cite{PRST} and the use of collective
strength and the Fano transfer principle for Fano schemes of
higher planes and complete intersections in \cite[Theorem~1.3]{ER}.
Here we consider contained lines on smooth complex hypersurfaces.

\section*{Acknowledgements}
The author is grateful to AI models, especially ChatGPT for valuable assistance with the exploration of this topic and preparation of the paper. The author takes full responsibility for its content and correctness.

\bibliography{research_next}

\newcommand{\etalchar}[1]{$^{#1}$}
\providecommand{\bysame}{\leavevmode\hbox to3em{\hrulefill}\thinspace}
\providecommand{\MR}{\relax\ifhmode\unskip\space\fi MR }
\providecommand{\MRhref}[2]{%
  \href{http://www.ams.org/mathscinet-getitem?mr=#1}{#2}
}
\providecommand{\href}[2]{#2}
\begin{thebibliography}{BLL{\etalchar{+}}24}

\bibitem[AK77]{AK}
A.~B. Altman and S.~L. Kleiman, \emph{Foundations of the theory of {Fano}
  schemes}, Compos. Math. \textbf{34} (1977), no.~1, 3--47,
  \href{https://www.numdam.org/item/CM_1977__34_1_3_0/}{Journal archive}.

\bibitem[Beh05]{B05}
R.~Beheshti, \emph{Linear subvarieties of hypersurfaces}, Int. Math. Res. Not.
  (2005), no.~49, 3055--3063,
  \href{https://doi.org/10.1155/IMRN.2005.3055}{doi:10.1155/IMRN.2005.3055}.

\bibitem[Beh06]{B06}
\bysame, \emph{Lines on projective hypersurfaces}, J. Reine Angew. Math.
  \textbf{592} (2006), 1--21,
  \href{https://doi.org/10.1515/CRELLE.2006.020}{Publisher DOI}.

\bibitem[Beh14]{B14}
\bysame, \emph{Hypersurfaces with too many rational curves}, Math. Ann.
  \textbf{360} (2014), no.~3--4, 753--768,
  \href{https://doi.org/10.1007/s00208-014-1024-8}{doi:10.1007/s00208-014-1024-8}.
  \href{https://www.math.wustl.edu/~beheshti/hl.pdf}{Author version}.

\bibitem[BLL{\etalchar{+}}24]{BLLRST}
R.~Beheshti, B.~Lehmann, C.~Lian, E.~Riedl, J.~Starr, and S.~Tanimoto, \emph{On
  the asymptotic enumerativity property for {Fano} manifolds}, Forum Math.
  Sigma \textbf{12} (2024), e112,
  \href{https://doi.org/10.1017/fms.2024.87}{doi:10.1017/fms.2024.87}.

\bibitem[BR21]{BR}
R.~Beheshti and E.~Riedl, \emph{Linear subspaces of hypersurfaces}, Duke Math.
  J. \textbf{170} (2021), no.~10, 2263--2288,
  \href{https://doi.org/10.1215/00127094-2021-0035}{doi:10.1215/00127094-2021-0035}.

\bibitem[BvdV78]{BVV}
W.~Barth and A.~van~de Ven, \emph{{Fano}-varieties of lines on hypersurfaces},
  Arch. Math. (Basel) \textbf{31} (1978), 96--104,
  \href{https://doi.org/10.1007/BF01226420}{doi:10.1007/BF01226420}.

\bibitem[Deb03]{Debarre03}
O.~Debarre, \emph{Lines on smooth hypersurfaces}, Unpublished notes, 2003,
  November 2003, with a 2006 addendum.
  \href{https://perso.imj-prg.fr/olivier-debarre/wp-content/uploads/debarre-pub/Lines_hypersurfaces.pdf}{Author
  manuscript}.

\bibitem[ER24]{ER}
D.~Erman and E.~Riedl, \emph{A transfer principle for unirationality},
  arXiv:2410.20051, 2024, \href{https://arxiv.org/abs/2410.20051}{Author
  manuscript}.

\bibitem[FL83]{FL}
W.~Fulton and R.~Lazarsfeld, \emph{Positive polynomials for ample vector
  bundles}, Ann. of Math. (2) \textbf{118} (1983), no.~1, 35--60,
  \href{https://doi.org/10.2307/2006953}{doi:10.2307/2006953}.

\bibitem[Ful98]{Fulton}
W.~Fulton, \emph{{Intersection Theory}}, second ed., Ergebnisse der Mathematik
  und ihrer Grenzgebiete, 3.~Folge, vol.~2, Springer-Verlag, Berlin, 1998,
  \href{https://doi.org/10.1007/978-1-4612-1700-8}{doi:10.1007/978-1-4612-1700-8}.

\bibitem[Gro57]{Grothendieck57}
A.~Grothendieck, \emph{Sur la classification des fibr\'es holomorphes sur la
  sph\`ere de {Riemann}}, Amer. J. Math. \textbf{79} (1957), no.~1, 121--138,
  \href{https://doi.org/10.2307/2372388}{doi:10.2307/2372388}.

\bibitem[Gro61]{FGAHilbert}
\bysame, \emph{Techniques de construction et th\'eor\`emes d'existence en
  g\'eom\'etrie alg\'ebrique iv : les sch\'emas de {Hilbert}}, S\'eminaire
  Bourbaki : ann\'ees 1960/61, expos\'es 205--222, S\'eminaire Bourbaki, no.~6,
  Soci\'et\'e Math\'ematique de France, 1961, Expos\'e no.~221.
  \href{https://www.numdam.org/item/SB_1960-1961__6__249_0/}{Original article},
  pp.~249--276.

\bibitem[Har77]{Hartshorne}
R.~Hartshorne, \emph{{Algebraic Geometry}}, Graduate Texts in Mathematics,
  vol.~52, Springer-Verlag, New York, 1977,
  \href{https://doi.org/10.1007/978-1-4757-3849-0}{doi:10.1007/978-1-4757-3849-0}.

\bibitem[Kle74]{Kleiman}
S.~L. Kleiman, \emph{The transversality of a general translate}, Compos. Math.
  \textbf{28} (1974), no.~3, 287--297,
  \href{https://www.numdam.org/item/CM_1974__28_3_287_0/}{Journal archive}.

\bibitem[Kol96]{Kollar}
J.~Koll{\'a}r, \emph{Rational curves on algebraic varieties}, Ergebnisse der
  Mathematik und ihrer Grenzgebiete, 3.~Folge, vol.~32, Springer-Verlag,
  Berlin, 1996,
  \href{https://doi.org/10.1007/978-3-662-03276-3}{doi:10.1007/978-3-662-03276-3}.

\bibitem[LT10]{LT}
J.~M. Landsberg and O.~Tommasi, \emph{On the {Debarre--de Jong} and
  {Beheshti--Starr} conjectures on hypersurfaces with too many lines}, Michigan
  Math. J. \textbf{59} (2010), no.~3, 573--588,
  \href{https://doi.org/10.1307/mmj/1291213957}{doi:10.1307/mmj/1291213957}.

\bibitem[Nob75]{Nobile}
A.~Nobile, \emph{Some properties of the {Nash} blowing-up}, Pacific J. Math.
  \textbf{60} (1975), no.~1, 297--305,
  \href{https://doi.org/10.2140/pjm.1975.60.297}{doi:10.2140/pjm.1975.60.297}.

\bibitem[PRST25]{PRST}
A.~Patel, E.~Riedl, G.~Smith, and D.~Tseng, \emph{Lines highly tangent to a
  hypersurface}, Accepted for publication in Proc. Amer. Math. Soc., 2025.

\bibitem[Stacks]{Stacks}
{The Stacks Project Authors}, \emph{{Stacks Project}},
  \url{https://stacks.math.columbia.edu}.

\end{thebibliography}

\Addresses
\end{document}